\documentclass[12pt]{amsart}
\usepackage[utf8]{inputenc}

\usepackage{amssymb,amsthm,amsfonts,enumerate, epsfig,epstopdf,caption, graphicx, xcolor}
\usepackage{amsmath}
\usepackage{subcaption}
\usepackage{hyperref}
\newcommand{\Q}{{\mathbb{Q}}}
\newcommand{\R}{{\mathbb{R}}}
\newcommand{\Z}{{\mathbb{Z}}}
\newcommand{\N}{{\mathbb{N}}}
\newcommand{\C}{{\mathbb{C}}}
\newcommand{\X}{{\mathcal{X}}}
\newcommand{\sltwo}{\mathrm{SL}_2(\mathbb{C})}
\DeclareMathOperator{\tr}{\mathrm{Tr}}

\newtheorem{lemma}{Lemma}[section]
\newtheorem{proposition}[lemma]{Proposition}
\newtheorem{theorem}[lemma]{Theorem}

\newtheorem{definition}[lemma]{Definition}

\title[Seifert manifolds with non-reduced character scheme]{Small Seifert $3$-manifolds with non-reduced $\mathrm{SL}_2(\C)$-character scheme}
\author{Renaud Detcherry}
\address[]{Université Bourgogne Europe, CNRS, IMB UMR 5584, F-21000 Dijon, France \& Institut Universitaire de France}
\email{renaud.detcherry@u-bourgogne.fr}
\thanks{}

\begin{document}
		\maketitle
	\begin{abstract} We complete the work started in \cite{DKS2}, and give a complete description of the $\mathrm{SL}_2(\C)$-character scheme $\X(M)$ of all small Seifert $3$-manifolds $M$. We find that $\X(M)$ is reduced if and only if $M$ admits no exceptional abelian character, and that exceptional abelian character have multiplicity $2$ in $\X(M).$ 
	\end{abstract}

\section{Introduction}
For $M$ a closed connected compact oriented $3$-manifold, let $\mathcal{R}(M)$ be its $\mathrm{SL}_2(\C)$-representation scheme, and let
$$\X(M)=\mathcal{R}(M)//\mathrm{SL}_2(\C)$$
denote the $\mathrm{SL}_2(\C)$-character scheme of $M.$ The underlying algebraic set to $\mathcal{R}(M)$ is the representation variety $R(M)=\mathrm{Hom}(\pi_1(M),\mathrm{SL}_2(\C)),$ and the underlying algebraic set to $\X(M)$ is
$$X(M)=\mathrm{Hom}(\pi_1(M),\mathrm{SL}_2(\C))//\mathrm{SL}_2(\C),$$  which is called the character variety of $M.$ Points in $X(M)$ are in one to one correspondance with equivalence classes of $\mathrm{SL}_2(\C)$-representations of $\pi_1(M),$ where two representations $\rho,\rho'$ are equivalent if and only if they have same trace. The precise definition of $\mathcal{R}(M)$ and $\X(M)$ are more technical, but can be found for instance in \cite[Section 3]{HP23}.

 The present note is concerned with describing for which small Seifert manifolds the scheme $\X(M)$ is not reduced, that is, the coordinate ring $\C[\X(M)]$ has non-zero nilpotent elements.

Let $M=S^2(q_1/p_1,q_2,p_2,q_3/p_3)$ be a Seifert manifold that fibers over $S^2$ with $3$ exceptional fibers of order $p_1,p_2,p_3,$ and with Euler number $$e(M)=\frac{q_1}{p_1}+\frac{q_2}{p_2}+\frac{q_3}{p_3}.$$
In \cite{DKS2}, the authors found that for a closed Seifert manifold $M,$ the character variety $X(M)$ is $0$-dimensional if and only if $M$ fibers over $S^2$ with at most $3$ exceptional fibers and non-zero Euler number, or if $M=\R P^3 \# \R P^3.$ Moreover, recall \cite[Theorem 6.1]{JN83} that for $M=S^2(q_1/p_1,q_2,p_2,q_3/p_3),$ a presentation of $\pi_1(M)$ is:
\begin{equation}\label{eq:pres}\pi_1(M)=\langle h,c_1,c_2,c_3 \ | \  [c_i,h]=1, c_i^{p_i}h^{q_i}=1,  c_1c_2c_3=1\rangle.\end{equation}
 For $N$ a compact connected $3$-manifold, a point in $X(N)$ is called an \textit{abelian character} if it is the trace of an abelian $\sltwo$-representation of $\pi_1(M).$ Similarly, it is called an \textit{irreducible character} if it is the trace of an irreducible representation.
Following \cite{DKS2}, we call an abelian character $[\rho]$ of $M=S^2(q_1/p_1,q_2,p_2,q_3/p_3)$ \textit{exceptional abelian} if one has
$$\tr \rho(h)=\pm 2, \ \textrm{and} \ \tr \rho(c_i)\neq \pm 2 \ \textrm{for} \ i=1,2,3.$$

We note that, as explained in \cite[Proposition 5.8]{DKS2}, any irreducible character of $M$ also satisfies those conditions; hence the exceptional abelian characters are in some sense the abelian characters that look similar to irreducible characters. The following summarizes the results of \cite[Section 5]{DKS2}:

\begin{theorem}\label{thm:DKS2} \cite{DKS2} Let $M=S^2(q_1/p_1,q_2,p_2,q_3/p_3)$ be a Seifert manifold and assume $e(M)\neq 0.$ Then 
	$$|X(M)|=p_1^+ p_2^+ p_3^-  + p_1^- p_2^- p_3^- + \frac{1}{2}(|H_1(M,\Z)| +|H_1(M,\Z/2\Z)|)-x_M$$
	where $p_i^+=\lceil \frac{p_i}{2} \rceil-1$ and $p_i^-=\lfloor \frac{p_i}{2} \rfloor$ and $x_M$ is the number of exceptional abelian characters. Moreover, a point $[\rho]$ in $X(M)$ is reduced, except possibly if $[\rho]$ is an exceptional abelian character. 
\end{theorem}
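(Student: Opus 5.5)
The plan is to separate the character variety $X(M)$ into irreducible and abelian characters, count each part, and then check reducedness locally at each point by computing Zariski tangent spaces. Every representation of $\pi_1(M)$ is either irreducible or has an invariant line, and a reducible character agrees with the character of its diagonal semisimplification. So the non-irreducible points of $X(M)$ are exactly the abelian characters. These factor through $H_1(M,\Z)$ into the diagonal torus $\C^*$, modulo the Weyl involution $\lambda\mapsto\lambda^{-1}$.

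\textbf{Counting abelian characters.} Since $e(M)\neq 0$, the group $H_1(M,\Z)$ is finite, so abelian representations are homomorphisms $H_1(M,\Z)\to\C^*$. There are $|H_1(M,\Z)|$ of them. The inversion fixes exactly those with values in $\{\pm1\}$, which correspond to $\mathrm{Hom}(H_1,\Z/2)$, a set of size $|H_1(M,\Z/2\Z)|$. Counting orbits gives $\frac12(|H_1(M,\Z)|+|H_1(M,\Z/2\Z)|)$ abelian characters.

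\textbf{Counting irreducible characters.} For an irreducible $\rho$, the element $\rho(h)$ is central because $h$ is central, so $\rho(h)=\epsilon I$ with $\epsilon=\pm1$. Hence $\rho(c_i)^{p_i}=\epsilon^{q_i}I$, and each $\rho(c_i)$ is $\pm I$ or diagonalizable with eigenvalue a $2p_i$-th root of unity. Irreducibility forces each $\rho(c_i)\neq\pm I$; otherwise $c_1c_2c_3=1$ makes the image abelian. So $\tr\rho(c_i)=2\cos(\pi k_i/p_i)$ with $0<k_i<p_i$ and the parity of $k_i$ fixed by $\epsilon^{q_i}$. For fixed sign $\epsilon$, the number of admissible traces at $c_i$ is $p_i^+$ or $p_i^-$.

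Triples $(A,B,C=(AB)^{-1})$ with prescribed non-$\pm2$ traces $(t_1,t_2,t_3)$ are governed by the Fricke relation on the pair of pants. There is exactly one conjugacy class of such triples, and it is irreducible unless $t_1^2+t_2^2+t_3^2-t_1t_2t_3-4=0$. In that reducible case the class is an exceptional abelian character.

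Summing over the two choices of $\epsilon$, and over the three parity constraints, gives the product terms. The parities of the $q_i$ and the normalization that $e(M)\neq0$ allows should let one arrange the terms as $p_1^+p_2^+p_3^-+p_1^-p_2^-p_3^-$. Each exceptional abelian character is then double counted: once as a trace triple and once among the abelian characters. Subtracting $x_M$ removes this.

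\textbf{Reducedness.} Since $X(M)$ is $0$-dimensional, a point is reduced exactly when its Zariski tangent space in $\X(M)$ vanishes. At an irreducible $\rho$, this tangent space is $H^1(\pi_1(M);\mathfrak{sl}_2)_{\mathrm{Ad}\rho}$.

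I will compute it from the presentation \eqref{eq:pres}, using the Seifert fibration or a Mayer--Vietoris decomposition over the orbifold $S^2(p_1,p_2,p_3)$. Since $\rho(h)=\pm I$, the adjoint action of $h$ is trivial. A cocycle is then determined by its values on $c_i$ and $h$, subject to linearized relations. The value $z(h)$ must be $\mathrm{Ad}\rho(c_i)$-invariant for all $i$, so by irreducibility $z(h)=0$. What remains is $H^1$ of the triangle orbifold group, which vanishes because that group is rigid: the count is $\dim Z^1 = \sum_i(3-1)-3=3=\dim B^1$.

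At abelian characters that are not exceptional, some $\tr\rho(c_i)=\pm2$ or $\tr\rho(h)\neq\pm2$. I will argue smoothness directly: a nearby deformation is forced to stay abelian, and the abelian locus $\mathrm{Hom}(H_1,\C^*)$ is reduced because $H_1$ is finite.

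\textbf{Main obstacle.} I expect the hardest step to be the local analysis at non-exceptional reducible characters. There the representation scheme is not smooth and one must pass to the GIT quotient. I would handle it by using Luna's slice theorem at the closed orbit of the diagonal representation. The tangent space of $\X(M)$ is then the invariants of $H^1(\pi_1 M;\mathfrak{sl}_2)$ under the torus stabilizer. The off-diagonal parts of this $H^1$ are twisted cohomologies $H^1(\pi_1M;\C_{\lambda^{\pm2}})$, and I would show they vanish exactly when the character is not exceptional, using the conditions on $\tr\rho(h)$ and $\tr\rho(c_i)$.
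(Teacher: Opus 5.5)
The paper does not prove this statement; it quotes it from \cite{DKS2}, so your proposal has to be judged against the statement itself. It fails at the one step you left vague, namely that the parities of the $q_i$ and a normalization ``should let one arrange the terms as $p_1^+p_2^+p_3^-+p_1^-p_2^-p_3^-$''. Do the bookkeeping. For $\epsilon=1$ the condition is $\rho(c_i)^{p_i}=I$, which does not involve $q_i$ at all. So every $k_i$ is even and the count is $p_1^+p_2^+p_3^+$, and no change of Seifert invariants can affect this. For $\epsilon=-1$ you need $k_i\equiv q_i \pmod 2$. If $p_i$ is even then $q_i$ is odd, and if $p_i$ is odd then $p_i^+=p_i^-$, so the count is $p_1^-p_2^-p_3^-$. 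Your method therefore produces $p_1^+p_2^+p_3^++p_1^-p_2^-p_3^-$. This differs from the displayed formula whenever $p_3$ is even and $p_1,p_2\geq 3$. For example, take $M=S^2(1/4,1/4,1/4)$. There are $1+8=9$ irreducible characters: the trace triple $(0,0,0)$ for $\epsilon=1$, and $(\pm\sqrt2,\pm\sqrt2,\pm\sqrt2)$ for $\epsilon=-1$, none satisfying $t_1^2+t_2^2+t_3^2-t_1t_2t_3=4$. Moreover $x_M=0$, $|H_1(M,\Z)|=48$ and $|H_1(M,\Z/2\Z)|=4$. So $|X(M)|=35$, while the displayed formula gives $36$. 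The first product in the statement appears to be a misprint for $p_1^+p_2^+p_3^+$. You should have found and flagged this instead of asserting that it can be arranged.

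Your reducedness argument is sound in outline but uses a false general claim. At a non-central diagonal $\rho=\mathrm{diag}(\lambda,\lambda^{-1})$, the tangent space of the Luna quotient is not the $T$-invariant part of $T_\rho S=H^1(\pi_1M;\mathrm{Ad}\rho)$. If $a,b$ are slice coordinates of weights $2$ and $-2$, then $ab$ is invariant and can survive in the cotangent space of $S/\!/T$. This is exactly what happens at exceptional characters: the invariant part $H^1(\pi_1M;\C)$ is zero there, yet Proposition~\ref{prop:nred-excep} gives a one-dimensional tangent space. What you need instead is that the whole slice tangent space vanishes at non-exceptional characters; then the slice, and hence its quotient, is a reduced point. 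This does hold. First, $H^1(\pi_1M;\C)=0$. For $\mu=\lambda^{\pm2}$, the relation $[c_i,h]=1$ gives $(1-\mu(h))z(c_i)=(1-\mu(c_i))z(h)$, so if $\mu(h)\neq1$ every cocycle is a coboundary. If $\mu(h)=1$, then $z(h)=0$, and $z(c_i)=0$ whenever $\mu(c_i)=1$. The relation $c_1c_2c_3=1$ then leaves $\dim H^1(\pi_1M;\C_\mu)=\#\{i:\mu(c_i)\neq1\}-2$, which vanishes unless the character is exceptional.

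Central characters need separate treatment, since the stabilizer there is $\sltwo$; in that case $Z^1=\mathrm{Hom}(H_1(M,\Z),\mathfrak{sl}_2)=0$. Drop the sentence about nearby deformations staying abelian: a $0$-dimensional scheme has no nearby points, and reducedness of the abelian locus says nothing about nilpotents in $\C[\X(M)]$.

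Finally, in the irreducible case your count $\dim Z^1=3$ presumes something you did not check. The linearized relation $c_1c_2c_3=1$ must impose three independent conditions on $(z(c_1),z(c_2),z(c_3))\in\prod_i\mathrm{Im}(\mathrm{Ad}\rho(c_i)-1)$. It does: a vector Killing-orthogonal to the image is fixed by $\mathrm{Ad}\rho(c_1)$ and $\mathrm{Ad}\rho(c_2)$, hence is zero by irreducibility.
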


We note that \cite[Section 5]{DKS2} also provides formulas for $x_M$ and $|H_1(M,\Z)|$ and $|H_1(M,\Z/2\Z)|,$ which we will not recall here.
The goal of this note is to prove the following, which completes the description of the character schemes of small Seifert manifold:

\begin{theorem}\label{thm:main} Let $M=S^2(q_1/p_1,q_2,p_2,q_3/p_3)$ be a Seifert manifold and assume $e(M)\neq 0,$ and let $[\rho]$ be an exceptional abelian character of $M.$ Then $[\rho]$ has multiplicity $2$ in $\X(M).$ In particular, one has
	$$\dim \C[\X(M)]=p_1^+ p_2^+ p_3^-  + p_1^- p_2^- p_3^- + \frac{1}{2}(|H_1(M,\Z)| +|H_1(M,\Z/2\Z)|),$$
	and furthermore,  $\X(M)$ is reduced if and only if $M$ has no exceptional abelian character.
\end{theorem}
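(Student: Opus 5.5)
The idea is to reduce the multiplicity computation to a local computation in the coordinate ring of $\X(M)$ at an exceptional abelian character $[\rho]$. Since $\X(M)$ is $0$-dimensional (Theorem~\ref{thm:DKS2} and \cite{DKS2}), $\C[\X(M)]$ is a finite product of local Artinian rings, one for each point of $X(M)$. Theorem~\ref{thm:DKS2} says all non-exceptional points are reduced, so each contributes $1$. Once we know that each exceptional abelian character has a local ring of length exactly $2$, the dimension formula follows by adding $x_M$ to $|X(M)|$. The reducedness criterion then follows as well, since length $2$ forces a nonzero nilpotent.

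The plan for the local computation is as follows.

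\textbf{Step 1: normalize $\rho$.} Because $\tr\rho(h)=\pm2$ and $\rho$ is abelian with $\tr\rho(c_i)\neq\pm 2$, the matrix $\rho(h)=\pm I$. Each $\rho(c_i)$ is diagonal, $\mathrm{diag}(\lambda_i,\lambda_i^{-1})$, with $\lambda_i\neq\pm1$, and $\lambda_1\lambda_2\lambda_3=1$ (possibly after inverting some $\lambda_i$).

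\textbf{Step 2: compute the Zariski tangent space.} I would use the standard identification of the tangent space of $\X(M)$ at a character with $H^1(\pi_1 M;\mathfrak{sl}_2)$, or rather its invariant part. For abelian characters this identification needs care, so I would instead work directly with the trace coordinates $x_i=\tr c_i$, $y=\tr h$ and the mixed trace functions. Concretely, I would use the presentation \eqref{eq:pres} and deform: set $\rho_\epsilon(h)=\pm(I+\epsilon H)$ and $\rho_\epsilon(c_i)=\rho(c_i)(I+\epsilon C_i)$. The relation $c_i^{p_i}h^{q_i}=1$ forces the diagonal part of $C_i$ to vanish, because $\lambda_i^{p_i}=\pm1$ is rigid; the commutation $[c_i,h]=1$ with $\lambda_i\neq\pm1$ forces $H$ to be diagonal. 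The remaining freedom is upper and lower triangular, and the product relation $c_1c_2c_3=1$ imposes linear conditions on it. Modulo conjugation, I expect the invariant tangent directions to be one-dimensional, coming from the deformation direction toward reducible non-abelian representations in which $h$ stays central. Equivalently, the tangent direction is measured by the invariant $\tr(c_1c_2^{-1}\cdots)$-type function, or by the product of off-diagonal entries $b\,c$.

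\textbf{Step 3: show the length is exactly $2$, not more.} This is the main obstacle. A $1$-dimensional tangent space only gives length at least $2$. I would parametrize a neighborhood in $\mathcal{R}(M)$ by representations where $\rho(c_1)$ is diagonal and $\rho(c_2)$ is a general matrix near $\mathrm{diag}(\lambda_2,\lambda_2^{-1})$, with $\rho(h)=\pm I$ after reduction. I would then use the invariant function $t=\tr(\rho(c_1)\rho(c_2))-(\lambda_1\lambda_2+\lambda_1^{-1}\lambda_2^{-1})$, which is essentially $(\lambda_1-\lambda_1^{-1})(\lambda_2-\lambda_2^{-1})\,bc$ up to a unit. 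The condition that $\rho(c_3)=(\rho(c_1)\rho(c_2))^{-1}$ has trace in the finite set of allowed values, i.e.\ that it has order dividing $2p_3$ with the correct sign, becomes a polynomial equation in $\tr(c_1c_2)$. Expanding to second order, this equation gives $t^2=0$ in the local ring and not $t=0$: the trace $\tr(c_3)$ is pinned by Chebyshev-type relations $S_{p_3}$, whose derivative vanishes exactly at the points where the eigenvalue is a root of unity with $\lambda_3\neq\pm1$, while the second derivative does not vanish. In practice, $\tr c_3$ is determined, and $\lambda_3^{p_3}=\pm1$ defines a reduced point in the $\lambda_3$ variable. The nilpotent instead arises because the requirement that $\rho(c_3)$ be diagonalizable with those eigenvalues forces the off-diagonal product $bc$ to vanish only to first order; this is where I expect the subtle computation lies. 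Concretely, I would carry it out as follows: use $\C[\X(M)]_{[\rho]}$ generated by $t$, check that $t\neq0$ but $t^2=0$ there via an explicit first-order deformation over $\C[\epsilon]/\epsilon^2$ and an obstruction at order $2$, and invoke Theorem~\ref{thm:DKS2} for all other points.
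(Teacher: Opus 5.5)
Your global reduction is the same as the paper's. Since $\X(M)$ is $0$-dimensional and the non-exceptional points are reduced (Theorem~\ref{thm:DKS2}), everything comes down to showing that the local ring at an exceptional abelian character $\chi$ has length exactly $2$. That is, $T_\chi\X(M)$ is one-dimensional and its generator does not extend to second order. But that local computation is the whole content of the theorem, and you leave it undone (``this is where I expect the subtle computation lies''). Moreover, both devices you propose for it fail.

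In Step~2 you deform the representation to first order, $\rho_\epsilon=\rho(1+\epsilon z)$. Since $\rho$ is diagonal, the diagonal part of a cocycle $z$ is a homomorphism $\pi_1M\to\C$, hence zero because $e(M)\neq 0$. This, and not the rigidity of $\lambda_i^{p_i}=\pm1$, is what kills the diagonal part of $C_i$: the relation $c_i^{p_i}h^{q_i}=1$ alone only gives $p_i\,\mathrm{diag}(C_i)=-q_iH$. So every $z(g)$ is off-diagonal, and $\frac{d}{d\epsilon}\tr\rho_\epsilon(g)=\tr(\rho(g)z(g))=0$ for all $g$. Hence the differential $T_\rho\mathcal{R}(M)\to T_\chi\X(M)$ is identically zero. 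The stabilizer torus acts on $H^1(\pi_1M;\mathfrak{sl}_2)$ with weights $\pm2$, so its invariants are generated by $bc$, which is quadratic in the first-order parameters. No count of ``invariant first-order directions'' can therefore compute $T_\chi\X(M)$. Deforming towards reducible non-abelian representations does not move the character at all, since those representations have character $\chi$. The paper avoids exactly this difficulty by differentiating the Heusener--Porti presentation of $\C[\X(M)]$ in the trace coordinates $u,x_i,y_i$. This yields $du=0$, $\varepsilon q_i\,dy_i=(p_i+q_i)\,dx_i$, and a common value $dX$ of $p_i\,dx_i/(q_i(\zeta_i-\zeta_i^{-1}))$.

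Step~3 goes wrong at the outset. Imposing $\rho(h)=\pm I$ restricts to the closed subscheme on which $u=2\varepsilon$ and $y_i=\varepsilon x_i$ identically. Combined with $\varepsilon q_i\,dy_i=(p_i+q_i)\,dx_i$, this forces $dx_i=0$. So that subscheme has zero tangent space at $\chi$, and no computation inside it can produce $t\neq 0$, $t^2=0$. Concretely, take $\rho(h)=\varepsilon I$ and $\rho(c_1)$ diagonal. Then $\tr\rho(c_1)$ and $\tr\rho(c_2)$ are pinned, and $\tr\rho(c_1c_2)-x_3^0$ is a unit times $s=bc$. The matrix relation for $c_3$ imposes $S_{p_3}(x_3)=0$, which is a \emph{simple} zero by Lemma~\ref{lemma:Chebyshev2}; it is $T_{p_3}$ that has a double zero, and the trace equation alone is not the whole ideal. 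Hence $s=0$ and the point is reduced.

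The nilpotent appears only once $h$ is allowed to move. Set $\rho(h)=\mathrm{diag}(\nu,\nu^{-1})$ with $\nu=\varepsilon e^{\tau}$, $\rho(c_1)=\mathrm{diag}(\mu,\mu^{-1})$ with $\mu$ determined by $\tau$, and $\rho(c_2)=\begin{pmatrix}a&b\\ c&d\end{pmatrix}$. By Luna's slice theorem, the completed local ring of $\X(M)$ at $\chi$ is the torus-invariant part of that of this slice, and it is generated by $\tau,a,s$. The relations act as follows:
\begin{itemize}
\item $[h,c_2]=1$ gives $\tau s=0$;
\item the relation for $c_2$ determines $a$;
\item the relation for $c_3$ gives $\alpha\tau+\beta s+(\text{higher order})=0$, with $\beta\neq 0$ and $\alpha$ a nonzero constant times $e(M)$.
\end{itemize}
Hence the local ring is $\C[[\tau,s]]/(\tau s,\alpha\tau+\beta s+\cdots)\cong\C[s]/(s^2)$. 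The tangent vector moves $\tau$ and $s$ together, and $\tau s=0$ is the second-order obstruction. Your plan never uses $e(M)\neq 0$ at the local level, yet it is precisely what makes $\alpha\neq0$.

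Note also that, in any coordinates, the order-$2$ check must allow a second-order correction term. In the paper's coordinates, the second-order part of $T_{q_i}(u)-T_{p_i}(x_i)$ is absorbed, simultaneously for $i=1,2,3$, by giving $u$ the second-order term $\varepsilon\,dX^2$. So that relation alone does not obstruct, and the argument of Proposition~\ref{prop:order2} needs an obstruction of the $\tau s=0$ type.
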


Theorem \ref{thm:main} in particular gives explicit examples of $3$-manifolds with non-reduced character scheme. While many examples of $3$-manifolds with non-reduced character schemes were constructed by Kapovich and Milson \cite{KM17}, to the author's knowledge, no example with $M$ an rational homology sphere, or such that the non-reducedness occurs at an abelian character can be found in the litterature. We note that the latter is significant, because while non-reducedness at irreducible characters can be checked by a computation of twisted homology group $H^1(M,\mathrm{Ad}\rho),$ which coincides with $T_{[\rho]}\X(M)$ when $\rho$ is irreducible (see \cite{LM:book}), there is no simple formula for the tangent space at abelian characters.

Our approach to prove Theorem \ref{thm:main} is to use the presentation of $\pi_1(M)$ given in Equation \ref{eq:pres}, together with a theorem of Heusener and Porti \cite{HP23} that, for a finitely generated group $\Gamma,$ gives a presentation of $\C[\X(\Gamma)]$ in terms of a presentation of the group $\Gamma.$ Differentiating the presentation of $\C[\X(M)]$ at an exceptional abelian character $[\rho]$ will give a presentation of the tangent space $T_[\rho]\X(M),$ which will be of dimension $1.$ A further differentiation will show that there is no order $2$ deformation of $[\rho].$

Let us note that for a $3$-manifold $M,$ by a theorem of Przytycki and Sikora \cite{PS00}, the coordinate ring $\C[\X(M)]$ is isomorphic to $S_{-1}(M),$ the Kauffman bracket skein module at $A=-1$ of $M.$ Moreover, under additional hypothesis on $M,$ the dimension of $\C[\X(M)]$ is equal to the dimension over $\Q(A)$ of $S(M,\Q(A)),$ the Kauffman bracket skein module of $M$ with $\Q(A)$ coefficients, by a theorem of the author and Kalfagianni and Sikora \cite{DKS1}, see also \cite{FKBT} for a generalization. 

The main theorem in particular completes the computation of the dimensions over $\Q(A)$ of the Kauffman bracket skein modules of small Seifert manifolds that was the main topic of \cite{DKS2}. Furthermore, together with the work of \cite{FKBT}, it shows hat those dimensions can really see the multiplicity of characters in $\mathcal{X}(M).$ Although we will not use anything about Kauffman bracket skein modules,  this was one of the main motivations for the author to write this note.

\textbf{Acknowledgements:} The author thanks L\'eo B\'enard, Charlie Frohman, Effie Kalfagianni and Adam Sikora for their interest and helpful conversations. The IMB receives support from the EIPHI Graduate School (contract ANR-17-EURE-0002).

\section{Useful formulas}
\label{sec:formulas}

\subsection{Chebyshev polynomials}
\label{sec:Cheby-poly}
\begin{definition}\label{def:chebyshev-pol} For $n\geq 0,$ define two sequences of polynomials $T_n,S_n \in \Z[X]$ by:
	$$T_0=2, T_1=X, T_{n+1}=XT_n -T_{n-1},$$
	$$S_0=0, S_1=1, S_{n+1}=XS_n -S_{n-1}.$$
The polynomials $T_n$ (resp $S_n$) are called Chebyshev polynomials of the first kind (resp. of the second kind).
\end{definition}
We note that the recurrence relation actually allows one to define $S_n$ and $T_n$ for negative $n$ as well, in which case we get $T_{-n}=T_n$ and  $S_{-n}=-S_n.$
We will make use of the following formulas:
\begin{lemma}\label{lemma:Chebyshev}
Let $n\geq 1$ be an integer and let $\zeta$ be a primitive $n$-th root of $1.$ We have the following identities:
\begin{equation}
	\label{eq:Cheb1} \forall x \in \C^*, \ T_n(x+x^{-1})=x^n+x^{-n} \ \textrm{and} \ S_n(x+x^{-1})=\frac{x^{n}-x^{-n}}{x-x^{-1}}
\end{equation}
\begin{equation}
	\label{eq:Cheb2}  T_n'(X)=nS_n(X)
\end{equation}
\begin{equation}
	\label{eq:Cheb3} \forall x \in \C^*, \ S_n'(x+x^{-1})=\frac{n(x^n+x^{-n})}{(x-x^{-1})^2}-\frac{(x^n-x^{-n})(x+x^{-1})}{(x-x^{-1})^3}
\end{equation}
\end{lemma}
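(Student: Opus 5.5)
The plan is to derive all three identities from the substitution $X=x+x^{-1}$ with $x\in\C^*$. Two facts make this work. First, the map $x\mapsto x+x^{-1}$ from $\C^*$ to $\C$ is surjective. Second, polynomial identities in $X$ may therefore be checked after this substitution, and derivatives in $X$ computed by the chain rule wherever $x\neq\pm1$.

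\textbf{Step 1: formula \eqref{eq:Cheb1}.} I will argue by induction on $n$.
\begin{itemize}
\item For the base cases, $T_0=2=x^0+x^0$ and $T_1=X=x+x^{-1}$. Similarly $S_0=0$ and $S_1=1=(x-x^{-1})/(x-x^{-1})$.
\item For the inductive step, I use the elementary identity
$$(x+x^{-1})(x^n\pm x^{-n})-(x^{n-1}\pm x^{-(n-1)})=x^{n+1}\pm x^{-(n+1)}.$$
This shows that both $x^n+x^{-n}$ and $(x^n-x^{-n})/(x-x^{-1})$ satisfy the same three-term recurrence as $T_n$ and $S_n$.
\item The $S_n$ formula is meant for $x\neq\pm1$. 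At $x=\pm1$ it holds by continuity, or equivalently as the polynomial identity $(x-x^{-1})S_n(x+x^{-1})=x^n-x^{-n}$.
\end{itemize}

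\textbf{Step 2: formula \eqref{eq:Cheb2}.} Set $X=x+x^{-1}$, so that $dX/dx=1-x^{-2}=x^{-1}(x-x^{-1})$.
\begin{itemize}
\item Differentiating $T_n(x+x^{-1})=x^n+x^{-n}$ in $x$ gives
$$T_n'(X)\,x^{-1}(x-x^{-1})=n x^{-1}(x^n-x^{-n}).$$
\item Hence $T_n'(X)=n\frac{x^n-x^{-n}}{x-x^{-1}}=nS_n(X)$ by Step 1.
\item This holds for all $X\neq\pm2$, an infinite set, so it is an identity of polynomials.
\end{itemize}

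\textbf{Step 3: formula \eqref{eq:Cheb3}.} Again take $x\neq\pm1$.
\begin{itemize}
\item Differentiate $S_n(x+x^{-1})=(x^n-x^{-n})(x-x^{-1})^{-1}$ in $x$. The derivative of the numerator is $nx^{-1}(x^n+x^{-n})$. The derivative of $x-x^{-1}$ is $1+x^{-2}=x^{-1}(x+x^{-1})$.
\item The quotient rule then gives
$$S_n'(X)\,x^{-1}(x-x^{-1})=\frac{nx^{-1}(x^n+x^{-n})}{x-x^{-1}}-\frac{(x^n-x^{-n})\,x^{-1}(x+x^{-1})}{(x-x^{-1})^2}.$$
\item Dividing by $x^{-1}(x-x^{-1})$ yields exactly \eqref{eq:Cheb3}. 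The formula only makes sense, and is only claimed, where the denominators are nonzero.
\end{itemize}

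No step is a real obstacle. The only care needed is with the points $x=\pm1$, where the divided expressions are undefined. These are handled by noting that the polynomial identities extend by density, and by reading \eqref{eq:Cheb3} only for $x\neq\pm1$.
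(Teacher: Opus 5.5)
Your proof is correct and follows the same route as the paper: induction on the three-term recurrence for the first identity, then differentiating $T_n(x+x^{-1})=x^n+x^{-n}$ and $S_n(x+x^{-1})=\frac{x^n-x^{-n}}{x-x^{-1}}$ with respect to $x$ to get the other two. The paper leaves implicit two details that you handle explicitly: the care at $x=\pm1$, where $dX/dx$ vanishes and the quotients are undefined, and the density argument that turns Step 2 into an identity of polynomials.
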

\begin{proof}
	The first identity is well-known and easily obtained by induction from the definition of $T_n$ and $S_n.$ The second identity is obtained by differentiating the equation $T_n(x+x^{-1})=x^n+x^{-n}$ with respect to $x,$ which gives $T_n'(x+x^{-1})=n\frac{x^n-x^{-n}}{x-x^{-1}},$ and noting that this equation caracterizes $nS_n.$ Similarly, the third equation is obtained by differentiating the equation $S_n(x+x^{-1})=\frac{x^n-x^{-n}}{x-x^{-1}}.$ 
\end{proof}
Finally, we will need to consider special values of Chebyshev polynomials and their derivatives:
\begin{lemma}
	\label{lemma:Chebyshev2} Let $n\in \N^*,$ $\varepsilon\in \lbrace \pm 1 \rbrace$ and $x=\zeta + \zeta^{-1}$ where $\zeta \neq \pm 1$ and $\zeta^n=\varepsilon.$ Then $S_n(x)=0, S_{n+1}(x)=\varepsilon, S_{n-1}(x)=-\varepsilon$ and $S_n(2\varepsilon)=n\varepsilon^{n-1}.$
	Moreover, $S_n'(x)=\frac{2n\varepsilon}{x^2-4}$ and $S_{n\pm 1}(x)=\frac{nx\varepsilon}{x^2-4}.$
\end{lemma}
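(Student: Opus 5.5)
The plan is to evaluate everything through the closed formulas of Lemma \ref{lemma:Chebyshev} at $x=\zeta+\zeta^{-1}$. These formulas apply because $\zeta\neq\pm 1$ gives $\zeta-\zeta^{-1}\neq 0$. The one identity used throughout is
$$(\zeta-\zeta^{-1})^2=(\zeta+\zeta^{-1})^2-4=x^2-4,$$
together with $\zeta^{\pm n}=\varepsilon$, which holds since $\varepsilon=\varepsilon^{-1}$. I read the last claim of the statement as concerning the derivatives, i.e. $S_{n\pm1}'(x)=\frac{nx\varepsilon}{x^2-4}$. That is presumably what is intended, since the values $S_{n\pm 1}(x)=\pm\varepsilon$ are already given.

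\emph{Values.} By Equation \eqref{eq:Cheb1}:
\begin{itemize}
\item $S_n(x)=\frac{\zeta^n-\zeta^{-n}}{\zeta-\zeta^{-1}}=\frac{\varepsilon-\varepsilon}{\zeta-\zeta^{-1}}=0$.
\item $\zeta^{n+1}-\zeta^{-n-1}=\varepsilon(\zeta-\zeta^{-1})$, so $S_{n+1}(x)=\varepsilon$.
\item $\zeta^{n-1}-\zeta^{-n+1}=\varepsilon(\zeta^{-1}-\zeta)$, so $S_{n-1}(x)=-\varepsilon$.
\end{itemize}
For $S_n(2\varepsilon)$, Equation \eqref{eq:Cheb1} does not apply directly because $x-x^{-1}=0$ at $x=\pm1$. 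Instead I will check two facts by induction on the recurrence. First, $S_n(2)=n$: the sequence $0,1,2,\dots$ satisfies $S_{n+1}=2S_n-S_{n-1}$. Second, $S_n(-X)=(-1)^{n-1}S_n(X)$, a parity statement that is immediate from the recurrence. Together these give $S_n(-2)=(-1)^{n-1}n$, so $S_n(2\varepsilon)=n\varepsilon^{n-1}$ in both cases.

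\emph{Derivatives.} I apply Equation \eqref{eq:Cheb3} with $x=\zeta+\zeta^{-1}$ and rewrite $(\zeta-\zeta^{-1})^2$ as $x^2-4$.
\begin{itemize}
\item For $S_n'$: the second term vanishes because $\zeta^n-\zeta^{-n}=0$, and the first term is $\frac{n\cdot 2\varepsilon}{x^2-4}$. This is the claimed $S_n'(x)=\frac{2n\varepsilon}{x^2-4}$.
\item For $S_{n+1}'$: use $\zeta^{n+1}+\zeta^{-n-1}=\varepsilon x$ and $\zeta^{n+1}-\zeta^{-n-1}=\varepsilon(\zeta-\zeta^{-1})$. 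Then
$$S_{n+1}'(x)=\frac{(n+1)\varepsilon x}{x^2-4}-\frac{\varepsilon x}{x^2-4}=\frac{n\varepsilon x}{x^2-4}.$$
\item For $S_{n-1}'$: use $\zeta^{n-1}+\zeta^{1-n}=\varepsilon x$ and $\zeta^{n-1}-\zeta^{1-n}=-\varepsilon(\zeta-\zeta^{-1})$. Then
$$S_{n-1}'(x)=\frac{(n-1)\varepsilon x}{x^2-4}+\frac{\varepsilon x}{x^2-4}=\frac{n\varepsilon x}{x^2-4}.$$
\end{itemize}
For $n=1$ this uses $S_0$, which is fine since the closed formula also holds for $n=0$.

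There is no serious obstacle here. The only point requiring care is the value at $2\varepsilon$, where the closed formula degenerates and one has to fall back on induction (or take a limit $x\to\pm1$). Beyond that, it is sign bookkeeping with $\varepsilon$.
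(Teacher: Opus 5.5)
Your proof is correct and follows the paper's argument: you evaluate the closed formulas of Lemma \ref{lemma:Chebyshev} at $x=\zeta+\zeta^{-1}$ using $(\zeta-\zeta^{-1})^2=x^2-4$, and you are right that the last claim is about the derivatives $S_{n\pm1}'$; you also carry out the $S_{n-1}'$ computation that the paper leaves to the reader. The only deviation is for $S_n(2\varepsilon)$. There the paper expands $S_n(\varepsilon+\varepsilon^{-1})$ as the Laurent sum $\varepsilon^{n-1}+\varepsilon^{n-3}+\dots+\varepsilon^{1-n}$, which avoids the degenerate quotient, whereas you use induction plus the parity $S_n(-X)=(-1)^{n-1}S_n(X)$; both are valid.
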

\begin{proof}
	We have 
	$$S_n(x)=S_n(\zeta+\zeta^{-1})=\frac{\zeta^n -\zeta^{-n}}{\zeta-\zeta^{-1}}=0,$$
	$$S_{n+1}(x)=S_{n+1}(\zeta+\zeta^{-1})=\frac{\zeta^{n+1} -\zeta^{-n-1}}{\zeta-\zeta^{-1}}=\varepsilon \frac{\zeta -\zeta^{-1}}{\zeta-\zeta^{-1}}=\varepsilon,$$
	$$S_{n-1}(x)=S_{n-1}(\zeta+\zeta^{-1})=\frac{\zeta^{n-1} -\zeta^{-n+1}}{\zeta-\zeta^{-1}}=\varepsilon \frac{\zeta^{-1} -\zeta}{\zeta-\zeta^{-1}}=-\varepsilon,$$
	and 
	$$S_n(2\varepsilon)=S_n(\varepsilon+\varepsilon^{-1})=\varepsilon^{n-1}+ \varepsilon^{n-3}+\ldots + \varepsilon^{-n+1}=n\varepsilon^{n-1}.$$
	Moreover, by Lemma \ref{lemma:Chebyshev},
	$$S_n'(x)=S_n'(\zeta+\zeta^{-1})=\frac{n(\zeta^n+\zeta^{-n})}{(\zeta-\zeta^{-1})^2}=\frac{2n\varepsilon}{x^2-4},$$
	since $\zeta^n-\zeta^{-n}=0$ and $(\zeta-\zeta^{-1})^2=\zeta^2+\zeta^{-2}-2=x^2-4.$
	Finally, 
	\begin{multline*}S_{n+1}(x)=S_{n+1}'(\zeta+\zeta^{-1})=\frac{(n+1)(\zeta^{n+1}+\zeta^{-n-1})}{(\zeta-\zeta^{-1})^2}-\frac{(\zeta^{n+1}-\zeta^{-n-1})(\zeta+\zeta^{-1})}{(\zeta-\zeta^{-1})^3}
		\\ =\frac{(n+1)x\varepsilon}{x^2-4}-\frac{x\varepsilon}{(\zeta-\zeta^{-1})^2}=\frac{nx\varepsilon}{x^2-4}.
	\end{multline*}
The computation for $S_{n-1}'(x)$ is similar and left to the reader.
\end{proof}

\subsection{Trace functions of powers}
\label{sec:trace_funct}
For $\Gamma$ a group, we recall that the coordinate ring $\C[\X(\Gamma)]$ is the ring spanned by formal variables $t_x$ for $x\in \Gamma$ and with relations, for $x,y\in \Gamma,$
$$t_{xy}+t_{xy^{-1}}=t_xt_y, t_{xy}=t_{yx}, t_1=2.$$
The variable $t_x$ may be interpreted as the trace function
$$t_x : [\rho] \in \X(\Gamma) \longrightarrow  \tr \rho(x).$$

In the next section, we will need to make use of the following formulas:

\begin{lemma}
	\label{lemma:trace-power} Let $\Gamma$ be a group, let $a,b\in \Gamma,$ and let $n\geq 0$ be an integer. Then we have the following identities in $\C[\X(\Gamma)]:$
	\begin{equation}\label{eq:powertrace1}t_{a^n}=T_n(t_a)\end{equation}
	\begin{equation}\label{eq:powertrace2}t_{a^nb}=S_n(t_a)t_{ab}-S_{n-1}(t_a)t_b \end{equation}
\end{lemma}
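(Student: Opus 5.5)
We prove both identities by induction on $n$, working only with the defining relations of $\C[\X(\Gamma)]$: $t_{xy}+t_{xy^{-1}}=t_xt_y$, $t_{xy}=t_{yx}$ and $t_1=2$. The key consequence is the three-term recurrence obtained by taking $x=a^{n}b$ and $y=a$ (and more generally $x=a^n c$ for a fixed $c\in\Gamma$):
$$t_{a^{n+1}c}+t_{a^{n}ca^{-1}}=t_{a^nc}\,t_a .$$
Cyclic invariance gives $t_{a^nca^{-1}}=t_{a^{-1}a^nc}=t_{a^{n-1}c}$, hence
$$t_{a^{n+1}c}=t_a\,t_{a^nc}-t_{a^{n-1}c}.$$
This is exactly the recurrence defining $T_n$ and $S_n$, and all the work consists in choosing initial conditions correctly.

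For \eqref{eq:powertrace1}, take $c=1$. The sequence $u_n=t_{a^n}$ satisfies $u_{n+1}=t_au_n-u_{n-1}$ with $u_0=t_1=2$ and $u_1=t_a$. This is the same recurrence with the same initial values as $T_n(t_a)$, so by induction $t_{a^n}=T_n(t_a)$ for all $n\ge 0$.

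For \eqref{eq:powertrace2}, take $c=b$ and set $v_n=t_{a^nb}$, so that $v_{n+1}=t_av_n-v_{n-1}$ for $n\geq 1$. Both $S_n(t_a)$ and $S_{n-1}(t_a)$ satisfy this same linear recurrence in $n$, so any combination $S_n(t_a)\alpha-S_{n-1}(t_a)\beta$ does too. It therefore suffices to check the claimed formula at $n=0$ and $n=1$.
\begin{itemize}
\item At $n=1$ the right-hand side is $S_1t_{ab}-S_0t_b=t_{ab}$, which matches.
\item At $n=0$ the right-hand side is $S_0t_{ab}-S_{-1}t_b$. The polynomial $S_{-1}$ is defined by running the recurrence backwards: $S_1=XS_0-S_{-1}$ gives $S_{-1}=-1$. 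So the right-hand side is $t_b$, which matches.
\end{itemize}
Induction on $n$ then gives \eqref{eq:powertrace2} for all $n\ge0$. Alternatively, one can avoid $S_{-1}$ by taking $n=1,2$ as base cases; for $n=2$ the recurrence gives $t_{a^2b}=t_at_{ab}-t_b=S_2(t_a)t_{ab}-S_1(t_a)t_b$.

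The only delicate point is deriving the three-term relation inside the abstract ring rather than for actual matrices. This requires the cyclic relation $t_{xy}=t_{yx}$ to rewrite $t_{a^nba^{-1}}$ as $t_{a^{n-1}b}$, and the case $n=0$ of that step, $t_{ba^{-1}}=t_{a^{-1}b}$, is covered by the same relation. Everything else is a routine induction.
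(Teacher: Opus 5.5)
Your proof is correct and takes the same route as the paper: you derive the three-term recurrence $t_{a^{n+1}b}=t_a t_{a^nb}-t_{a^{n-1}b}$ from the trace relations, then induct from the base cases $n=0,1$, using $S_{-1}=-1$ at $n=0$. Your remark that any fixed linear combination of $S_n(t_a)$ and $S_{n-1}(t_a)$ satisfies the same recurrence makes the inductive step cleaner than the paper's displayed computation, whose $t_b$-coefficient contains an index slip ($t_aS_n-S_{n-1}$ where $t_aS_{n-1}-S_{n-2}$ is meant).
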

\begin{proof}
	We remark that Equation \ref{eq:powertrace1} is obviously true for $n=0$ or $1,$ and furthermore that the relation $t_{xy}+t_{xy^{-1}}=t_xt_y$ gives for $x=a^n$ and $y=a$ that $t_{a^{n+1}}=-t_{a^{n-1}}+t_at_{a^n}.$ An easy induction shows then that $t_{a^n}=T_n(t_a)$ for all $n\geq 0.$
	
	Similarly, $t_b= 0\cdot t_{ab}-(-t_b),$ and $t_{ab}=1\cdot t_{ab}-0\cdot t_b,$ so Equation \ref{eq:powertrace2} is true for $n=0$ or $1.$ If we assume by induction that $t_{a^kb}=S_k(t_a)t_{ab}-S_{k-1}(t_a)t_b$ for all $k\leq n$ and for some polynomials $Q_k,R_k \in \Z[X],$ then
	
	\begin{multline*}t_{a^{n+1}b}=-t_{a^{n-1}b}+t_at_{a^nb}=(t_a S_n(t_a)-S_{n-1}(t_a))t_{ab}-(t_aS_n(t_a)-S_{n-1}(t_a))t_b
		\\=S_{n+1}(t_a)t_{ab}-S_{n}(t_a)t_b,
	\end{multline*}
	which proves Equation \ref{eq:powertrace2} for all $n.$  
\end{proof}

\section{Computation of $\C[\X(M)]$}
\label{sec:presCarac}
\subsection{Presentation of $\C[\X(\Gamma)]$ from a presentation of $\Gamma$}
\label{sec:pres}
In this section, we review the main theorem of \cite{HP23}, before applying it in the next subsection.

Let $\Gamma=\langle x_1,\ldots,x_n \ | \ r_1=1,\ldots ,r_k=1 \rangle$ be a finitely presented group. Then we have a surjective morphism $F_n\twoheadrightarrow \Gamma,$ which induces a surjection $\C[\X(F_n)]\twoheadrightarrow \C[\X(\Gamma)]$ by sending the formal variable $t_x$ in $\C[\X(F_n)]$ to $t_x$ in $\C[\X(\Gamma)]$ for any word $x$ in the generators $x_i^{\pm 1}$ of $F_n.$

We will describe $\C[\X(\Gamma)]$ as a quotient of $\C[\X(F_n)].$ A presentation of $\C[\X(F_n)]$ is given in \cite{ABL18} but we will only need the case $n=3,$ which we give below, and state in a compatible way to the notations used in the next section:

\begin{proposition}
	\label{prop:F3-charac} \cite[Example 15]{HP23} Let $F_3=\langle h,c_1,c_2 \rangle$ be a free group with $3$ generators, and set $c_3=c_1c_2.$ Define the following elements of $\C[\X(F_3)]:$
	$$u=t_h, x_i=t_{c_i}, y_i=t_{hc_i}, \ \textrm{for} \ i=1,2,3.$$
	Then
	$$\C[\X(F_3)]=\C[u,x_1,x_2,x_3,y_1,y_2,y_3]/(F)$$
	where 
	\begin{multline*}F=y_3^2-(ux_3+x_1y_2+x_2y_1-ux_1x_2)y_3
		\\+(u^2+x_1^2+x_2^2+y_1^2+y_2^2+x_3^2+y_1y_2x_3-ux_1y_1-ux_2y_2-x_1x_2x_3-4).
		\end{multline*}
	
\end{proposition}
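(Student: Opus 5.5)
The statement to prove is Proposition \ref{prop:F3-charac}: $\C[\X(F_3)]$ is generated by the seven trace functions $u,x_1,x_2,x_3,y_1,y_2,y_3$, subject to the single relation $F=0$. My plan rests on three classical facts about $\C[\X(F_n)]$: it is generated by the traces of products of distinct generators; it has Krull dimension $3n-3$; and it is an integral domain, being the invariant ring of the irreducible variety $\sltwo^n$. For $n=3$ this dimension is $6$. So it suffices to establish three things:
\begin{enumerate}[(i)]
\item the seven elements generate $\C[\X(F_3)]$;
\item $F$ vanishes identically on the seven traces;
\item $F$ generates the whole kernel of $\C[u,x_1,x_2,x_3,y_1,y_2,y_3]\to \C[\X(F_3)]$.
\end{enumerate}

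\emph{Generation (i).} By the Fricke–Vogt description, $\C[\X(F_3)]$ is generated by $t_h,t_{c_1},t_{c_2},t_{hc_1},t_{hc_2},t_{c_1c_2},t_{hc_1c_2}$. With $c_3=c_1c_2$ these are exactly $u,x_1,x_2,y_1,y_2,x_3,y_3$. I would cite this fact, or rederive it by repeatedly applying the relations $t_{xy}+t_{xy^{-1}}=t_xt_y$ and $t_{xy}=t_{yx}$ to shorten words.

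\emph{The relation (ii).} The classical fact is that the two traces $t_{hc_1c_2}$ and $t_{hc_2c_1}$ satisfy
$$t_{hc_1c_2}+t_{hc_2c_1} = \text{(sum)},\qquad t_{hc_1c_2}\,t_{hc_2c_1}=\text{(product)},$$
where both the sum and the product are polynomials in the six other traces. The sum should come out as $ux_3+x_1y_2+x_2y_1-ux_1x_2$. I would verify this by expanding $\tr(ABC)+\tr(ACB)$ with the identity $\tr(XY)+\tr(XY^{-1})=\tr X\tr Y$, writing $t_{hc_2c_1}$ in terms of $t_{hc_1^{-1}c_2}$ and similar words.

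The product is the main computation. I would obtain it either from the standard Fricke polynomial or by a direct matrix verification: put $\rho(h)$ in a normal form, so that $\rho(h)$ is diagonal or upper triangular, and let $\rho(c_1),\rho(c_2)$ be generic. Then $y_3$ is a root of $Y^2-(\text{sum})Y+(\text{product})$, and this quadratic is precisely $F$. Since $F$ is a polynomial identity, it suffices to check it on a Zariski-dense set of triples.

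\emph{Sufficiency (iii).} The polynomial $F$ is monic of degree $2$ in $y_3$. The six elements $u,x_1,x_2,x_3,y_1,y_2$ are algebraically independent in $\C[\X(F_3)]$: they generate a subring over which $\C[\X(F_3)]$ is integral, so dimension $6$ forces independence. Consequently $\C[u,\dots,y_3]/(F)$ is a free module of rank $2$ over $\C[u,x_1,x_2,x_3,y_1,y_2]$, with basis $1,y_3$.

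To see that the induced surjection is injective, I will show that $y_3$ does not satisfy a linear relation $a+by_3=0$ over the polynomial ring in the other six variables, unless $a=b=0$. This holds because $y_3\neq t_{hc_2c_1}$ generically: there exist representations with equal values of the other six traces but different values of $y_3$, for instance $\rho$ and the representation obtained from it by the inverse-transpose-type involution swapping the order $c_1c_2\leftrightarrow c_2c_1$.

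Hence the kernel is exactly $(F)$. Alternatively, I can argue that $F$ is irreducible (it is monic quadratic in $y_3$ with nonsquare discriminant), so $(F)$ is prime of height $1$; the kernel is also a height-one prime containing it, and therefore equals it.

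I expect the main obstacle to be the explicit product formula in step (ii), that is, matching every coefficient of $F$. I would handle it by a symbolic matrix computation in a convenient normal form, checking afterwards on the diagonal abelian case as a sanity check.
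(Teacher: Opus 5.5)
Your proposal is correct, and it takes a different route from the paper, which gives no proof of this proposition. The paper imports the presentation verbatim from Heusener--Porti \cite[Example 15]{HP23}, and that result ultimately rests on the classical Fricke--Vogt theory and the presentations of $\C[\X(F_n)]$ in \cite{ABL18}.

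You rebuild the statement from three facts:
\begin{enumerate}[(i)]
\item the seven traces generate $\C[\X(F_3)]$;
\item $F=y_3^2-sy_3+p$, where $s=y_3+t_{hc_2c_1}$ and $p=y_3\,t_{hc_2c_1}$;
\item an integrality and dimension count showing $u,x_1,x_2,x_3,y_1,y_2$ are algebraically independent.
\end{enumerate}
Consequently $\C[u,\dots,y_3]/(F)$ is free on $1,y_3$ over $P=\C[u,x_1,x_2,x_3,y_1,y_2]$, and only relations of the form $a+by_3=0$ with $a,b\in P$ remain to be excluded. This buys a self-contained argument that shows why there is exactly one relation: $X(F_3)$ is a degree-two cover of $\C^6$.

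The cost is that you use, without saying so, the identification of the formal trace ring with $\C[\sltwo^3]^{\sltwo}$. You need it when you call $\C[\X(F_3)]$ a domain and when you check $F=0$ by evaluating on matrices. With the paper's definition of $\C[\X(\Gamma)]$ via formal variables $t_x$, this identification (Brumfiel--Hilden, Przytycki--Sikora) should be cited.

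The involution in step (iii) needs to be stated precisely. On $\sltwo$, the map $A\mapsto (A^{-1})^{T}$ is conjugation by $\begin{pmatrix}0&1\\-1&0\end{pmatrix}$. So applying ``inverse-transpose'' to all of $\rho$ leaves the character unchanged and gives nothing. What works is to precompose $\rho$ with the automorphism of $F_3$ that inverts $h,c_1,c_2$; equivalently, transpose each of $\rho(h),\rho(c_1),\rho(c_2)$. This fixes the six traces in $P$ and sends $y_3$ to $t_{hc_2c_1}=s-y_3$. Let $\sigma$ be the induced automorphism of the domain $\C[\X(F_3)]$. It fixes $P$, so applying it to $a+by_3=0$ gives $b(2y_3-s)=0$. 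Since $2y_3-s=\tr(ABC)-\tr(ACB)$ is not identically zero (one explicit triple suffices), you get $b=0$ and then $a=0$.

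The same observation fills the gap in your alternative height-one argument, where you assert that the discriminant is not a square. One has $(2y_3-s)^2=s^2-4p$. If $s^2-4p=r^2$ with $r\in P$, then $2y_3-s=\pm r$ would be fixed by $\sigma$, but $\sigma$ negates it, which would force $2y_3-s=0$, a contradiction.
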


When $\Gamma$ is a finitely presented group, with a presentation of the form 
$$\Gamma=\langle x_1,\ldots,x_n \ | \ w_i=w_i', \ i=1,\ldots,k \rangle$$
with $n$ generators and $k$ relations, where the $w_i$'s and $w_i'$'s are words in $x_1,\ldots, x_n,$ \cite[Theorem 1, Addendum]{HP23} gives a presentation of $\C[\X(\Gamma)]$ as follows:

\begin{theorem}
	\label{thm:pres-characScheme}\cite[Theorem 1, Addendum]{HP23} Let $\Gamma$ be a finitely presented group
	$$\Gamma=\langle x_1,\ldots,x_n \ | \ w_i=w_i', \ i=1,\ldots,k \rangle$$ then
	$$\C[\X(\Gamma)]=\C[\X(F_n)]/I$$
	where $I$ is the ideal generated by the elements $t_{w_ix}-t_{w_i'x},$ for $i\in \lbrace 1,\ldots,k \rbrace$ and for $x\in \lbrace 1 \rbrace \cup \lbrace x_i, \ 1\leq i \leq n \rbrace \cup \lbrace x_ix_j , \ 1\leq i <j \leq n \rbrace.$
\end{theorem}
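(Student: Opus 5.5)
The statement immediately above is Theorem \ref{thm:pres-characScheme}, which the paper cites from \cite[Theorem 1, Addendum]{HP23}; here I sketch how I would prove it directly. The inclusion $I\subseteq \ker(\C[\X(F_n)]\to\C[\X(\Gamma)])$ is immediate. In $\Gamma$ we have $w_ix=w_i'x$, so $t_{w_ix}=t_{w_i'x}$ in $\C[\X(\Gamma)]$. The real content is the reverse inclusion. Let $J$ denote the full kernel. It is the ideal generated by all $t_{uw_iv}-t_{uw_i'v}$ for arbitrary words $u,v$, since $\C[\X(\Gamma)]$ is the trace algebra of $\Gamma$ with relations $t_{xy}+t_{xy^{-1}}=t_xt_y$, $t_{xy}=t_{yx}$ and $t_1=2$. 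By cyclicity, $t_{uw_iv}=t_{w_ivu}$, so $J$ is generated by the elements $t_{w_ix}-t_{w_i'x}$ for all $x\in F_n$. The task is therefore to show that the finitely many choices $x\in\lbrace 1, x_j, x_jx_l\rbrace$ already generate the ideal spanned by all of these.

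For this I would use the trace identities to rewrite $t_{wx}$, for an arbitrary word $x$, as a $\C[\X(F_n)]$-linear combination of the $t_{wx'}$ with $x'$ short. The key identity, applied with $g=w$, is
\[ t_{g\,ab}+t_{g\,ba}... \]
More precisely, I would use the fundamental $\mathrm{SL}_2$ trace relation
\[ t_{gab}+t_{gba}=t_{ga}t_b+t_{gb}t_a+t_gt_{ab}-t_gt_at_b, \]
which holds in $\C[\X(F_n)]$ and follows from the defining relations, since it is the standard consequence of Cayley--Hamilton. Together with $t_{ga^{-1}}=t_gt_a-t_{ga}$ and $t_{gaa}=t_at_{ga}-t_g$, this lets one reduce by induction on word length. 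Writing $\delta(x)=t_{w_ix}-t_{w_i'x}$, the map $x\mapsto\delta(x)$ satisfies the same linear relations with coefficients in $\C[\X(F_n)]$. These coefficients involve only traces of $x$-subwords, not of $w$, so they are identical for $w_i$ and $w_i'$. Consequently, $\delta$ of any word lies in the ideal generated by $\delta$ of words of length at most $2$ in distinct generators, taken in increasing index order. The ordering can be arranged using $t_{gba}=(\ldots)-t_{gab}$.

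The main obstacle is organizing this induction cleanly, in particular the treatment of longer words in three or more distinct letters. I would handle it by a descending argument on length, using the relation above to move letters and to collapse repeated ones. A further subtlety is that one must work with the scheme, not the variety, so radicals cannot be invoked. However, the argument above is purely ideal-theoretic, and this is exactly what \cite{HP23} proves, so in the paper it is legitimate simply to cite it.
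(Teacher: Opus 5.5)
The paper gives no proof of this statement; it is imported from \cite{HP23}. The first half of your outline is sound. With $\C[\X(\Gamma)]$ taken to be the universal trace algebra (as in Section 2.2), the kernel is generated by $t_g-t_{g'}$ for $g\equiv g'$ in $\Gamma$. Such pairs are joined by moves $uw_iv\leftrightarrow uw_i'v$, and cyclicity turns these into $\delta_i(x)=t_{w_ix}-t_{w_i'x}$ with $x\in F_n$. Your uniformity claim is also right. Each reduction step is an instance of an identity valid for arbitrary $g$: for a swap inside $x=yabz$, rotate $z$ into the $g$-slot, apply the identity with $zgy$ in place of $g$, and rotate back. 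The coefficients produced are traces of subwords of $x$, so they are the same for $w_i$ and $w_i'$.

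The gap is at the step you single out as the main obstacle and then leave unresolved. Moving letters and collapsing repeated letters can never shorten a word whose letters are pairwise distinct, such as $x_1x_2x_3$. For a permutation $\sigma$ of distinct letters $a,b,c$, single-letter swaps only give $t_{g\sigma(abc)}\equiv \mathrm{sgn}(\sigma)\,t_{gabc}$ modulo shorter words, and there is nothing to collapse. The missing idea is to apply your swap identity once with a two-letter block:
\[ t_{g(ab)c}+t_{gc(ab)}=t_{gab}t_c+t_{gc}t_{ab}+t_gt_{abc}-t_gt_{ab}t_c. \]
This gives $t_{gabc}\equiv -t_{gcab}$, whereas two single-letter swaps give $t_{gcab}\equiv t_{gabc}$. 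Hence $2t_{gabc}\equiv 0$ modulo shorter words, and since $2$ is invertible in $\C$, every word of length at least $3$ reduces (apply this to its first three letters, rotating any suffix into the $g$-slot). This is the trace-identity form of the $2\times 2$ fact that Cayley--Hamilton puts $ABC$ in the trace-ring span of $1$ and products of length at most $2$.

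With this step added, your sketch is a complete proof, granting the identification of $\C[\X(\Gamma)]$ with the universal trace algebra. That is more self-contained than the paper, which relies entirely on the citation.
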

We illustrate this theorem by computing $\C[\X(\Z\times F_2)],$ noticing that the fundamental groups of small Seifert manifolds are quotients of $\Z\times F_2.$

\begin{proposition}\label{prop:ZxF2-characScheme} Let 
	$\Gamma=\Z\times F_2=\langle h,c_1,c_2 \ | hc_1=c_1h, hc_2=c_2h \rangle,$ and set $c_3=c_1c_2,$  $u=t_h,$ and $x_i=t_{c_i}, y_i=t_{hc_i},$ for $i=1,2,3.$ Then 
	$$\C[\X(\Gamma)]=\C[u,x_1,x_2,x_3,y_1,y_2,y_3]/(J)$$
	where $J$ is the ideal generated by the elements
	\begin{equation}
		\label{eq:rel1} u^2+x_i^2+y_i^2-ux_iy_i-4, \ i=1,2,3
	\end{equation}
	\begin{equation}
		\label{eq:rel2} 2y_i-(ux_i+x_jy_3-x_3y_j), \ \lbrace i,j \rbrace=\lbrace 1,2\rbrace
	\end{equation}
	\begin{equation}
		\label{eq:rel3} 2y_3-(u(x_3-x_1x_2)+y_1x_2+y_2x_1)
	\end{equation}
	\begin{equation}
		\label{eq:rel4} 2x_3-(uy_3+x_1x_2-y_1y_2)
	\end{equation}
\end{proposition}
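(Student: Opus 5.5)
We apply Theorem \ref{thm:pres-characScheme} to the presentation $\Gamma=\langle h,c_1,c_2 \mid hc_1=c_1h,\ hc_2=c_2h\rangle$. The ideal is generated by $t_{hc_iw}-t_{c_ihw}$ for $i=1,2$ and $w\in\{1,h,c_1,c_2,hc_1,hc_2,c_1c_2\}$. Each generator has to be expressed in the coordinates $u,x_i,y_i$ of Proposition \ref{prop:F3-charac}, using only the defining relations $t_{ab}+t_{ab^{-1}}=t_at_b$ and $t_{ab}=t_{ba}$, and then reduced modulo $F$.

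\textbf{Trivial cases.} The cases $w=1$ and $w=h$ give trivially true relations, since $t_{hc_i}=t_{c_ih}$ and $t_{hc_ih}=t_{c_ih^2}$ by cyclic invariance.

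\textbf{Case $w=c_i$.} Here the generator is $t_{hc_i^2}-t_{c_ihc_i}$, which vanishes by cyclicity. So the content lies in the mixed words.

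\textbf{Case $w=c_j$ with $j\neq i$.} The generator is $t_{hc_ic_j}-t_{c_ihc_j}$. For $(i,j)=(1,2)$ this is $t_{hc_3}-t_{c_1hc_2}$. We compute $t_{c_1hc_2}=t_{hc_2c_1}$ using the trace identity for $t_{xy}+t_{xy^{-1}}$ and the standard formula for the trace of a product of three elements. This yields a linear relation in $y_3$, which should be \eqref{eq:rel3} after substituting the sum of the two orderings, $t_{hc_1c_2}+t_{hc_2c_1}$. The analogous computation with $(i,j)=(2,1)$ should give \eqref{eq:rel2}.

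\textbf{Case $w=hc_i$.} The generators $t_{hc_ihc_i}-t_{c_ih^2c_i}$ give $T_2(y_i)=t_{h^2c_i^2}$. Expanding $t_{h^2c_i^2}$ via Lemma \ref{lemma:trace-power} yields the Markov-type relation \eqref{eq:rel1} for $i=1,2$. The remaining words $hc_j$ with $j\neq i$, and $c_1c_2$, should produce \eqref{eq:rel1} for $i=3$ and \eqref{eq:rel4}.

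\textbf{Reverse inclusion.} Conversely, we check that the listed elements generate the whole ideal $I+(F)$. It suffices to show that $F$ lies in $J$. For this, solve \eqref{eq:rel3} for $y_3$, substitute into $F$, and reduce using \eqref{eq:rel1}. The cleanest route is to verify that $F$ is a combination of the displayed generators with polynomial coefficients. Alternatively, one can compare with the known description: a representation commuting with $h$ either has $\rho(h)=\pm I$, or all $\rho(c_i)$ lie in the centralizer torus or parabolic group of $\rho(h)$. In both cases the listed relations hold, which confirms that they lie in $I$.

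\textbf{Main obstacle.} The main difficulty is the bookkeeping: rewriting words such as $t_{hc_1hc_2}$ and $t_{c_1h^2c_2}$ in the seven coordinates requires repeated use of the four-term trace identities. We must then show that the resulting ideal agrees exactly with $J$, not merely up to radical. In particular, the factor $2$ in \eqref{eq:rel2}--\eqref{eq:rel4} must arise honestly, as the sum of two cyclically distinct orderings, rather than from a division.
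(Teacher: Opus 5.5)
Your strategy is the paper's: apply Theorem~\ref{thm:pres-characScheme}, rewrite each generator $t_{hc_iw}-t_{c_ihw}$ in $u,x_i,y_i$, then show $F$ is redundant. However, the correspondence you state between test words and relations is wrong, and this hides a real gap.

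Carrying out the computations gives a different picture:
\begin{itemize}
\item The two words $w=c_j$, $j\neq i$, give generators that are negatives of each other, since $t_{c_2hc_1}=t_{hc_1c_2}$ and $t_{c_1hc_2}=t_{hc_2c_1}$ by cyclicity. So $(i,j)=(2,1)$ gives \eqref{eq:rel3} again, not \eqref{eq:rel2}.
\item It is $w=c_1c_2$ that gives \eqref{eq:rel2}. For instance, $t_{hc_1c_1c_2}=x_1y_3-y_2$ and $t_{c_1hc_1c_2}=t_{hc_1\cdot c_2c_1}=x_3y_1-ux_2+y_2$, whose difference is \eqref{eq:rel2} for $i=2$.
\item The words $w=hc_j$, $j\neq i$, give \eqref{eq:rel4}, from $t_{hc_2hc_1}=y_1y_2+x_3-x_1x_2$ and $t_{c_2h^2c_1}=uy_3-x_3$; the other ordering agrees modulo \eqref{eq:rel3}.
\end{itemize}
Consequently none of the Heusener--Porti generators is $\delta_3:=u^2+x_3^2+y_3^2-ux_3y_3-4$. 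The commutation $hc_3=c_3h$ is not a defining relator, and the test words have length at most $2$. Your claim that $hc_j$ or $c_1c_2$ produces \eqref{eq:rel1} for $i=3$ is false, so the inclusion $J\subseteq I+(F)$ is not established.

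The repair is short, and there are two ways to do it.
\begin{itemize}
\item Observe that $hc_3=c_3h$ does hold in $\Gamma$. Then $t_{hc_3hc_3}=t_{c_3h^2c_3}$ is a valid identity in $\C[\X(\Gamma)]=\C[\X(F_3)]/I$, and the expansion you did for $i=1,2$ gives $\delta_3\in I+(F)$. This is the paper's argument.
\item Verify the polynomial identity
\[ F=\delta_1+\delta_2-\delta_3+y_3\,\mu+x_3\,\nu, \]
where $\delta_i,\mu,\nu$ are the elements \eqref{eq:rel1}, \eqref{eq:rel3}, \eqref{eq:rel4}. It shows at once that $F\in J$ and that $\delta_3\in(\delta_1,\delta_2,\mu,\nu,F)\subseteq I+(F)$. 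The paper writes this identity with $-x_3\mu-y_3\nu$, which is a slip; expanding both sides confirms the signs above.
\end{itemize}

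Two further points. First, your sketch for $F\in J$ is incomplete as stated. After substituting \eqref{eq:rel3} and reducing with \eqref{eq:rel1} for $i=1,2$, you are left with $-y_3^2-u^2+x_3^2+4+x_3(y_1y_2-x_1x_2)$, and you need \eqref{eq:rel4} to turn this into $-\delta_3$. Second, drop the \emph{alternative} argument via representations commuting with $h$. Checking that the relations vanish at every representation only places them in the radical of $I+(F)$. As you note yourself under \emph{Main obstacle}, that is exactly the distinction that matters for a statement about character schemes.
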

\begin{proof}
	Applying Theorem \ref{thm:pres-characScheme}, the algebra $\C[\X(\Gamma)]$ is the quotient of $\C[u,x_1,x_2,x_3,y_1,y_2,y_3]$ by the relations $t_{hc_ix}=t_{c_ihx}$ for $i=1,2$ and $x\in \lbrace 1,h,c_1,c_2,hc_1,hc_2,c_1c_2 \rbrace,$ and the relation $F=0$ from Proposition \ref{prop:F3-charac}. We have to express the relations $t_{hc_ix}=t_{c_ihx}$ as polynomials in $u,x_1,x_2,x_3,y_1,y_2,y_3.$
	
	First the relations $t_{hc_i}=t_{c_ih},$ the relations $t_{hc_ih}=t_{c_ih^2}$ and the relations $t_{hc_i^2}=t_{c_ihc_i}$ are vacuous by the properties of trace functions. Next we inspect $t_{hc_ihc_i}=t_{c_ih^2c_i}$ for $i=1,2.$ An easy computation using the rules $t_{xy}+t_{xy^{-1}}=t_xt_y$ and $t_{xy}=t_{yx}$ gives that
	$$t_{hc_ihc_i}-t_{c_ih^2c_i}=u^2+x_i^2+y_i^2-ux_iy_i-4.$$
	Note that we have in $\Gamma,$ that $hc_3=c_3h$ where $c_3=c_1c_2.$ Thus the relation $t_{hc_3hc_3}=t_{c_3h^2c_3}$ and hence $u^2+x_3^2+y_3^2-ux_3y_3-4=0$ is also true in $\C[\X(\Gamma)].$ Although this equation is not part of the presentation given by Theorem \ref{thm:pres-characScheme}, we keep it for symmetry and to simplify the other relations.
	
	We get Equation \ref{eq:rel2} from the equation $t_{hc_1(c_1c_2)}=t_{c_1h(c_1c_2)}.$ We have
	\begin{multline*}
		t_{hc_1c_1c_2}=t_{c_1hc_1c_2}\Leftrightarrow t_{c_1}t_{hc_1c_2}-t_{hc_2}=t_{hc_1}t_{c_1c_2}-t_{hc_2^{-1}}
		\\ \Leftrightarrow x_1y_3-y_2
		=y_1x_3-(t_ht_{c_2}-t_{hc_2})
		\Leftrightarrow 2y_2-ux_2=x_1y_3-y_1x_3
	\end{multline*}
	which gives Equation \ref{eq:rel2} for $i=2.$ Similarly, the equation $t_{hc_2c_1c_2}=t_{c_2hc_1c_2}$ gives Equation \ref{eq:rel2} for $i=1.$
	Next we have the equations $t_{hc_1c_2}=t_{c_1hc_2}$ and $t_{hc_2c_1}=t_{c_2hc_1}.$ These two equations are actually equivalent, because, by the properties of trace functions, $t_{hc_1c_2}=t_{c_2hc_1}$ and $t_{c_1hc_2}=t_{hc_2c_1}.$ We compute
	\begin{multline*}t_{hc_2c_1}=t_ht_{c_1c_2}-t_{hc_1^{-1}c_2^{-1}}=ux_3-(x_2t_{hc_1^{-1}}-t_{hc_1^{-1}c_2})
		\\=ux_3-(x_2(ux_1-y_1))+t_{hc_1c_2}-t_{c_1}t_{hc_2}=u(x_3-x_1x_2)+y_1x_2+x_1y_2-y_3.\end{multline*}
	Thus the equation $t_{hc_1c_2}=t_{c_1hc_2}=t_{hc_1c_2}$ gives Equation \ref{eq:rel3}.
	
	Finally, we have the equations $t_{hc_ihc_j}=t_{c_ihhc_j}$ for $\lbrace i,j \rbrace=\lbrace 1,2 \rbrace.$ We claim that the two equations are equivalent. Indeed, $t_{hc_ihc_j}=t_{hc_jhc_i}.$ Moreover, $t_{c_ih^2c_j}=t_ht_{c_ihc_j}-t_{c_ic_j}.$ However, $t_{c_ic_j}=t_{c_jc_i}$ and we already assumed $t_{c_ihc_j}=t_{hc_ic_j}=t_{c_jhc_i}.$ Therefore, we need only to look at this equation for $i,j=2,1.$ We have
	$$t_{hc_2hc_1}=t_{hc_2}t_{hc_1}-t_{hc_2c_1^{-1}h^{-1}}=y_1y_2+t_{c_2c_1}-t_{c_1}t_{c_2}=y_1y_2+x_3-x_1x_2.$$
	On the other hand,
	$$t_{c_2h^2c_1}=t_ht_{c_2hc_1}-t_{c_1c_2}=ut_{hc_1c_2}-x_3=uy_3-x_3.$$
	This gives Equation \ref{eq:rel4}.
	Finally, we need to consider the element $F$ from Proposition \ref{prop:F3-charac}. Let $\delta_i$ the element corresponding to Equation \ref{eq:rel1}, and $\mu,\nu$ the elements corresponding to Equations \ref{eq:rel3} and \ref{eq:rel4}. Then a direct computation shows that
	
	$$F=\delta_1+\delta_2-\delta_3-x_3\mu-y_3\nu.$$
	Therefore, the equation $F=0$ is redundant.
\end{proof}
 
\subsection{A presentation of $\C[\X (M)]$ for $M$ a small Seifert manifold}
In this section we will give a presentation of $\C[\X (M)]$ when $M$ is a Seifert manifold fibering over $S^2$ with $3$ exceptional fibers. Let us write $M=S^2(-\frac{q_1}{p_1},-\frac{q_2}{p_2},\frac{q_3}{p_3})$ where each $p_i$ is coprime with $q_i.$ By \cite{JN83}, we have the following presentation:
\begin{equation}
	\label{eq:pres2}\pi_1(M)=\langle h,c_1,c_2 | hc_1=c_1h, hc_2=c_2h,  c_i^{p_i}=h^{q_i}, i=1,2,3 \rangle
\end{equation}
where in the above we set $c_3:=c_1c_2.$ Compared with the presentation given in Equation \ref{eq:pres}, we have gotten rid of the generator $c_3$ and re-arranged the relations to make it easier to apply Theorem \ref{thm:pres-characScheme}; moreover our choice of signs in the Seifert parameters is so that no sign appears in the presentation.

We will get the following:
\begin{theorem}
	\label{thm:presCharacSchemeSeifert} For $M=S^2(-\frac{q_1}{p_1},-\frac{q_2}{p_2},\frac{q_3}{p_3})$ a small Seifert manifold, the ring $\C[\X(M)]$ is isomorphic to the quotient of $\C[u,x_1,x_2,x_3,y_1,y_2,y_3]$ by the ideal generated by the elements \ref{eq:rel1}, \ref{eq:rel2}, \ref{eq:rel3} and \ref{eq:rel4} of Proposition \ref{prop:ZxF2-characScheme}, and the elements:
	\begin{equation}
		\label{eq:rel5} T_{q_i}(u)-T_{p_i}(x_i), \ i=1,2,3
	\end{equation}
	\begin{equation}
		\label{eq:rel6} T_{q_i+1}(u)-S_{p_i}(x_i)y_i+S_{p_i-1}(x_i)u, \ i=1,2,3
	\end{equation}
		\begin{equation}
		\label{eq:rel9}S_{q_i}(u)y_i-S_{q_i-1}(u)x_i-T_{p_i+1}(x_i), \ i=1,2,3
	\end{equation}
	\begin{equation}
		\label{eq:rel7} S_{q_i}(u)y_j-S_{q_i-1}(u)x_j-S_{p_i}(x_i)x_3+S_{p_i-1}(x_i)x_j, \ \lbrace i , j \rbrace=\lbrace 1,2\rbrace
	\end{equation}
	\begin{equation}
		\label{eq:rel8}
		S_{q_i+1}(u)y_i-S_{q_i}(u)x_i-S_{p_i+1}(x_i)y_i+S_{p_i}(x_i)u, \ i=1,2,3
	\end{equation}
	\begin{equation}
		\label{eq:rel10}S_{q_i}(u)y_k-S_{q_i-1}(u)x_k-S_{p_i+1}(x_i)x_k+S_{p_i}(x_i)x_j, \ i\neq j\neq k, \ \textrm{and} \ j\neq 3
	\end{equation}
	\begin{equation}
		\label{eq:rel11}S_{q_i+1}(u)y_j-S_{q_i}(u)x_j-S_{p_i}(x_i)y_3+S_{p_i-1}(x_i)y_j, \ \lbrace i,j \rbrace=\lbrace 1,2\rbrace
	\end{equation}
	
	\begin{equation}
		\label{eq:rel12} S_{q_3+1}(u)y_i-S_{q_3}(u)x_i-S_{p_3}(x_3)(x_iy_3-y_j)+S_{p_3-1}(x_3)y_i, \ \lbrace i,j \rbrace=\lbrace 1,2  \rbrace
	\end{equation}
\end{theorem}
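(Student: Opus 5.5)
We apply Theorem \ref{thm:pres-characScheme} to the presentation (\ref{eq:pres2}). It has generators $h,c_1,c_2$, the two commutation relations $hc_i=c_ih$, and three relations $c_i^{p_i}=h^{q_i}$. By Proposition \ref{prop:ZxF2-characScheme}, the commutation relations together with the relation $F$ of Proposition \ref{prop:F3-charac} yield exactly the ideal $J$. So the remaining task is to compute the new generators
$$t_{c_i^{p_i}x}-t_{h^{q_i}x}, \qquad x\in\lbrace 1,h,c_1,c_2,hc_1,hc_2,c_1c_2\rbrace,\ i=1,2,3,$$
as polynomials in $u,x_1,x_2,x_3,y_1,y_2,y_3$, working modulo $J$. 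Because $J$ is already in the ideal, every trace we meet may be rewritten modulo $J$. In particular we may use that $h$ commutes with each $c_i$, and hence with $c_3=c_1c_2$.

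The tool throughout is Lemma \ref{lemma:trace-power}. For $x=1$, equation (\ref{eq:powertrace1}) gives $T_{p_i}(x_i)-T_{q_i}(u)$, which is (\ref{eq:rel5}). For the other choices of $x$ we expand both sides with (\ref{eq:powertrace2}). On one side $a=c_i$, $n=p_i$; on the other $a=h$, $n=q_i$; in both, $b=x$. The outcome is $S_{p_i}(x_i)t_{c_ix}-S_{p_i-1}(x_i)t_x$ on the left and $S_{q_i}(u)t_{hx}-S_{q_i-1}(u)t_x$ on the right. The traces $t_{c_ix}$, $t_{hx}$ and $t_x$ are words of length at most three. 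We reduce them to the seven coordinates using $t_{ab}+t_{ab^{-1}}=t_at_b$, cyclicity, and the computations already done in the proof of Proposition \ref{prop:ZxF2-characScheme}. Examples are $t_{h^2}=T_2(u)$, $t_{c_2h^2c_1}=uy_3-x_3$, $t_{hc_2hc_1}=y_1y_2+x_3-x_1x_2$, and $t_{hc_1c_2}=y_3$. Each choice of $x$ then gives one of the families in the statement:
\begin{itemize}
\item $x=h$ gives (\ref{eq:rel6});
\item $x=c_i$ gives (\ref{eq:rel9}), using $T_{p_i+1}=x_iS_{p_i}-S_{p_i-1}$ up to rearrangement;
\item $x=c_j$ with $j\neq i$ gives (\ref{eq:rel7});
\item $x=hc_i$ gives (\ref{eq:rel8});
\item $x=c_1c_2$, and more generally the mixed products, give (\ref{eq:rel10}), (\ref{eq:rel11}) and (\ref{eq:rel12}).
\end{itemize}
In some cases the most natural form of a relation uses a shifted index, for example $S_{p_i+1}$ in place of $S_{p_i}$. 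We pass between the two forms by multiplying by $t_a$ and using the Chebyshev recurrence, or equivalently by choosing $a^{n+1}b$ versus $a^nb\cdot a$.

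For $i=3$ we have $c_3=c_1c_2$, which is not a generator. So "$c_3x$" means a word of length up to four, such as $c_1c_2c_1$ or $c_1c_2hc_2$. Here we use the symmetric version: $h$ commutes with $c_3$ modulo $J$, and $t_{c_3}=x_3$, $t_{hc_3}=y_3$. This lets us treat $c_3$ formally like $c_1,c_2$. The new traces to reduce are $t_{c_3c_j}$ and $t_{c_3hc_j}$. For example, $t_{c_1c_2c_1}=x_1x_3-x_2$ by the trace identity. Likewise $t_{c_1c_2hc_2}=x_2t_{c_1hc_2}-t_{c_1h}$, where $t_{c_1hc_2}=y_3$. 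Computations of this kind give the terms $x_iy_3-y_j$ in (\ref{eq:rel12}) and the index pattern $i\neq j\neq k$ in (\ref{eq:rel10}).

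The main obstacle is bookkeeping rather than conceptual. We must check that each of the $3\times 7$ relations matches, modulo $J$, one of the listed elements. Some pairs, as in Proposition \ref{prop:ZxF2-characScheme}, turn out equivalent or redundant: $x=hc_j$ with $j\neq i$ reduces to a combination of the others via (\ref{eq:rel2}). Conversely, we must check that every listed element actually lies in the ideal. A listed element that is not literally one of the Heusener–Porti generators is shown to be an $\Z[u,x_i]$-combination of the generators and of $J$, obtained from the recurrence $S_{n+1}=XS_n-S_{n-1}$. This mirrors the symmetric inclusion of $\delta_3$ in Proposition \ref{prop:ZxF2-characScheme}. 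With both inclusions established, the two ideals coincide, which gives the stated presentation of $\C[\X(M)]$.
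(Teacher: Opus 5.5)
Your proposal takes the same route as the paper: apply Theorem~\ref{thm:pres-characScheme} to (\ref{eq:pres2}), expand $t_{c_i^{p_i}x}$ and $t_{h^{q_i}x}$ for $x\in\lbrace 1,h,c_1,c_2,hc_1,hc_2,c_1c_2\rbrace$ with Lemma~\ref{lemma:trace-power}, reduce modulo $J$, and match the results with the listed families, so the argument is correct in outline. Only the bookkeeping needs fixing: the identity you quote for (\ref{eq:rel9}) should be $T_{p_i+1}=x_iS_{p_i+1}-2S_{p_i}$ (or just use (\ref{eq:powertrace1}) for $t_{c_i^{p_i+1}}$), and (\ref{eq:rel11}), (\ref{eq:rel12}) are exactly the $x=hc_j$, $j\neq i$, generators, whereas $x=c_1c_2$ gives (\ref{eq:rel10}) with $k=3$ for $i=1,2$ and (\ref{eq:rel9}) for $i=3$; your aside that the $x=hc_j$ generators are redundant is true (by $t_{ABC}+t_{ACB}=t_At_{BC}+t_Bt_{AC}+t_Ct_{AB}-t_At_Bt_C$ with $h$ central modulo $J$, they are combinations of the $x=1,h,c_j$ ones) but unnecessary, since those generators are on the list anyway.
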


\begin{proof}
	We apply Theorem \ref{thm:pres-characScheme} to the presentation \ref{eq:pres2}. We adopt the same notations as in the proof of Proposition \ref{prop:ZxF2-characScheme}, thus $\C[\X(\pi_1(M))]$ will be a quotient of $\C[u,x_1,x_2,x_3,y_1,y_2,y_3]$. Notice that the first two relations, $c_1h=hc_1$ and $c_2h=hc_2$ are the relations defining $F_2\times\Z.$ By Proposition \ref{prop:ZxF2-characScheme}, they will lead to relations \ref{eq:rel1}, \ref{eq:rel2}, \ref{eq:rel3} and \ref{eq:rel4}. We compute what others relations are created by the relations $c_i^{p_i}=h^{q_i},$ which we will denote by $(r_i).$ They will correspond to the trace of $(r_i)x,$ for $x\in \lbrace 1,h,c_1,c_2,hc_1,hc_2,c_1c_2 \rbrace.$ We will repeatedly use Lemma \ref{lemma:trace-power} to simplify those traces.
	
	Taking the trace of $(r_i)$ for $i=1,2,3$ we get, by the first item of Lemma \ref{lemma:trace-power}:
	$$t_{h^{q_i}}=T_{q_i}(u)=T_{p_i}(x_i)=t_{c_i^{p_i}},$$
	that is, we get relation \ref{eq:rel5}.
	
	Taking the trace of $(r_i)h$ for $i=1,2,3,$ we get
	$$T_{q_i+1}(u)=t_{h^{q_i+1}}=t_{hc_i^{p_i}}=S_{p_i}(t_{c_i})t_{hc_i}-S_{p_i-1}(t_{c_i})t_h=S_{p_i}(x_i)y_i-S_{p_i-1}(x_i)u,$$
	that is, we get relation \ref{eq:rel6}. Here we used the second item of Lemma \ref{lemma:trace-power} to get the fourth equality.
	
	Similarly, the trace of $(r_i)c_i$ for $i=1,2,3$ gives relation \ref{eq:rel9}. The trace of $(r_i)c_j$ for $\lbrace i,j \rbrace=\lbrace 1,2 \rbrace$ gives relation \ref{eq:rel7}. The trace of $(r_i)hc_i$ for $i=1,2,3$ gives relation \ref{eq:rel8}. (Note that the trace of $(r_3)hc_3$ is normally not among the relations given in Theorem \ref{thm:pres-characScheme}, but we can always include it for symmetry).
	
	The trace of $(r_i)c_3$ for $i=1$ or $2$ gives relation \ref{eq:rel10}, in the case where $k=3$ and $\lbrace i,j \rbrace=\lbrace 1,2 \rbrace.$  On the other hand, the trace of $(r_3)c_i$ where $i\neq 3$ gives
	$$S_{q_3}(u)y_i-S_{q_3-1}(u)x_i= t_{h^{q_3}c_i}=t_{c_3^{p_3}c_i}=S_{p_3}(x_3)t_{c_3c_i}-S_{p_3-1}(x_3)x_i $$
	However, one computes from properties of trace functions that $t_{c_3c_i}=t_{c_i^2c_j}=x_ix_3-x_j,$ where $\lbrace 1,2\rbrace=\lbrace i,j \rbrace.$ Hence, we have
	$$S_{q_3}(u)y_i-S_{q_3-1}(u)x_i=S_{p_3}(x_3)(x_ix_3-x_j)-S_{p_3-1}x_i=S_{p_3+1}(x_3)x_i-S_{p_3}(x_3)x_j,$$
	that is, we get the remaining cases of relation \ref{eq:rel10}.
	
	Next, the trace of $(r_i)hc_j$ for $\lbrace i,j \rbrace=\lbrace 1,2 \rbrace$ gives $t_{h^{q_i+1}c_j}=t_{c_i^{p_i}hc_j},$ thus using Lemma \ref{lemma:trace-power}:
	$$S_{q_i+1}(u)y_j-S_{q_i}(u)x_j=S_{p_i}(x_i)t_{hc_jc_i}-S_{p_i-1}(x_i)y_j.$$
	If $(i,j)=(2,1),$ then $t_{hc_jc_i}$ is simply $y_3.$ However, if $(i,j)=(1,2),$ then $t_{hc_2c_1}=t_{hc_1c_2}$ thanks to Equation \ref{eq:rel3}. Thus we get Equation \ref{eq:rel11}.
	
	Finally, we look at the trace of $(r_3)hc_i$ for $i=1$ or $2,$ and let $j\in \lbrace 1,2,3 \rbrace \setminus \lbrace i,3\rbrace.$ Thanks to Lemma \ref{lemma:trace-power}, we get
	$$S_{q_3+1}(u)y_i-S_{q_3}(u)x_i=S_{p_3}(x_3)t_{c_1c_2hc_i}-S_{p_3-1}(x_3)y_i.$$
	To conclude, we compute using trace functions properties that
	$$t_{c_1c_2hc_1}=t_{hc_1^2c_2}=t_{c_1}t_{hc_1c_2}-t_{hc_2}=x_1y_3-y_2$$
	and
	$$t_{c_1c_2hc_2}=t_{hc_1c_2^2}=t_{c_2}t_{hc_1c_2}-t_{hc_1}=x_2y_3-y_1.$$
	In the above, the first equality comes from relation \ref{eq:rel2}, since it is equivalent to $t_{hc_2c_1c_2}=t_{c_2hc_1c_2}.$
	Therefore, we get
	$$S_{p_3}(x_3)t_{c_1c_2hc_i}-S_{p_3-1}(x_3)y_i=S_{p_3}(x_3)(x_iy_3-y_j)-S_{p_3-1}(x_3)y_i,$$
	for $\lbrace i,j \rbrace=\lbrace 1,2 \rbrace,$ which gives relation \ref{eq:rel12}.
	
	Since we have now taken into account all relations in Theorem \ref{thm:pres-characScheme}, we have a presentation of $\C[\X(\pi_1(M))].$

\end{proof}

	\section{The tangent space at exceptional abelian characters}
\label{sec:tangentSpace}

We will start by recalling from \cite{DKS2} the definition of exceptional abelian characters:

\begin{definition}
	\label{def:exceptional} Let $M=S^2(-\frac{q_1}{p_1},-\frac{q_2}{p_2},\frac{q_3}{p_3})$ be a small Seifert manifold, with $\pi_1(M)$ given by the presentation \ref{eq:pres}. Let $\chi\in X(\pi_1(M)).$ We say that $\chi$ is an exceptional abelian character if $\chi$ is the character of an abelian representation of $\pi_1(M),$ and moreover, $\chi(u)=\pm 2,$ and $\chi(c_i)\neq \pm 2$ for $i=1,2,3.$
\end{definition}
In \cite[Section 5.3]{DKS2}, the following was shown:
\begin{proposition}
	\label{prop:red-nonexcep}
	Let $M$ be a small Seifert manifold and let $\chi \in X(M).$ Then $\chi$ is isolated in $X(M),$ and moreover if $\chi$ is not an exceptional abelian character, then $\X(M)$ is reduced at $\chi.$
\end{proposition}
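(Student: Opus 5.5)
The plan is to separate the two assertions. Isolatedness comes from the dimension count of \cite{DKS2} recalled in the introduction: since $e(M)\neq 0$, $X(M)$ is $0$-dimensional, so every $\chi$ is isolated. For reducedness I would show that the Zariski tangent space $T_\chi\X(M)$ is zero. For an isolated point this is equivalent to the local ring being $\C$, i.e.\ to reducedness at $\chi$. To compute $T_\chi\X(M)$ I will use the explicit presentation of Theorem \ref{thm:presCharacSchemeSeifert}: take the differential of each generator of the ideal at the point $(u,x_1,x_2,x_3,y_1,y_2,y_3)=\chi$, and show that the resulting linear system in $(du,dx_i,dy_i)$ has only the trivial solution. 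The special values of Chebyshev polynomials and their derivatives will be supplied by Lemmas \ref{lemma:Chebyshev} and \ref{lemma:Chebyshev2}.

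\emph{Irreducible characters.} Here $\rho(h)=\pm I$ and $\tr\rho(c_i)\neq\pm2$, by \cite[Proposition 5.8]{DKS2}. Rather than differentiating, I would use that $T_\chi\X(M)=H^1(\pi_1(M),\mathrm{Ad}\rho)$ for irreducible $\rho$. Since $\mathrm{Ad}\rho(h)$ is trivial and irreducibility kills the $\rho$-invariants, a cocycle must vanish on $h$. Hence $H^1$ reduces to $H^1$ of the triangle group $\langle c_1,c_2,c_3\mid c_i^{p_i}=1,\ c_1c_2c_3=1\rangle$ with coefficients in $\mathrm{Ad}\rho$. Each $\rho(c_i)$ is a nontrivial diagonalizable element of finite order in $\mathrm{PSL}_2$, so a cocycle value on $c_i$ lies in a $2$-dimensional space. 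The constraint from $c_1c_2c_3=1$ cuts the dimension of $Z^1$ down to $2+2+2-3=3=\dim B^1$. Thus $H^1=0$, and this case is done.

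\emph{Abelian characters with $u\neq\pm2$.} Conjugate $\rho$ to be diagonal, with $h\mapsto\mathrm{diag}(\lambda,\lambda^{-1})$ and $c_i\mapsto\mathrm{diag}(\mu_i,\mu_i^{-1})$. Then $\lambda^{q_i}=\mu_i^{p_i}$ and $\mu_3=\mu_1\mu_2$, and $y_i=\lambda\mu_i+\lambda^{-1}\mu_i^{-1}$. Relation \ref{eq:rel1} with $u\neq\pm2$ determines the $dy_i$ linearly from $du$ and the $dx_i$, up to the usual square-root ambiguity which is fixed by the point. Relations \ref{eq:rel5}, \ref{eq:rel6}, \ref{eq:rel9} then give, in eigenvalue coordinates, the linearized equations $q_i\,d\log\lambda=p_i\,d\log\mu_i$, together with $d\log\mu_3=d\log\mu_1+d\log\mu_2$. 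The determinant of this system is, up to a nonzero factor, $p_1p_2p_3\,e(M)$, which is nonzero, so only the zero solution survives.

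\emph{Abelian characters with $u=\pm2$ and some $x_i=\pm2$.} This is the case I expect to be the main obstacle, because deformations toward nonabelian representations could a priori contribute tangent vectors. Relation \ref{eq:rel1} then degenerates to a square, so it gives no first-order information, and $T_{p_i}'(\pm2)=p_i^2(\pm1)^{p_i-1}$ must be used instead. My first attempt would be to substitute the values $S_n(2\varepsilon)=n\varepsilon^{n-1}$ (and the corresponding derivative values at $\pm2$) into the differentials of \ref{eq:rel6}--\ref{eq:rel12}, and treat the index $i$ with $x_i=\pm2$ as a pivot. At such $i$, both $\rho(c_i)$ and $\rho(h)$ are $\pm I$ in the abelian setting. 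Relations \ref{eq:rel7}, \ref{eq:rel10}, \ref{eq:rel11}, \ref{eq:rel12} should then become nondegenerate linear equations, with coefficients like $p_i$ and $q_i$, in the other variables. From there I would use coprimality of $p_i$ and $q_i$, together with $e(M)\neq0$, to force all differentials to vanish, checking subcases according to how many $x_i$ equal $\pm2$. The reason exceptional abelian characters escape this argument is that there all $x_i\neq\pm2$, so $S_{p_i}(x_i)$ can vanish by Lemma \ref{lemma:Chebyshev2} and the pivot is lost; this is consistent with the statement.
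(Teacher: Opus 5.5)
The paper does not prove this proposition itself; it imports it from \cite[Section 5.3]{DKS2}. Judged on its own, your proposal has a genuine gap, and it sits precisely in the case you single out as the main obstacle.

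\textbf{The unfinished case.} For abelian characters with $u=\pm2$ and some $x_i=\pm2$ (the central characters, and those with exactly one $\rho(c_i)=\pm I$), you only describe a computation you would try (``should then become nondegenerate'', ``checking subcases''). Nothing shows that the differentials of \eqref{eq:rel5}--\eqref{eq:rel12} have full rank $7$ there. Since \eqref{eq:rel1} is degenerate at these points, that rank statement is the whole content of the claim.

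\textbf{The case $u\neq\pm2$ is also incorrect as written.} The coefficient of $dy_i$ in the differential of \eqref{eq:rel1} is
\[
2y_i-ux_i=(\lambda-\lambda^{-1})(\mu_i-\mu_i^{-1}),
\]
which vanishes when $\mu_i=\pm1$. This does occur with $\lambda\neq\pm1$: take $M=S^2(-\frac32,-\frac13,\frac13)$ with $\lambda=e^{2\pi i/3}$, $\mu_1=1$, $\mu_2=\mu_3=e^{2\pi i/9}$. At such a point $dy_i$ is not determined by \eqref{eq:rel1} and $x_i$ is not a local coordinate, so your passage to $d\log\mu_i$ breaks down. Moreover, the relation $d\log\mu_3=d\log\mu_1+d\log\mu_2$ is asserted rather than derived from \eqref{eq:rel2}--\eqref{eq:rel4}.

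\textbf{A smaller point in the irreducible case.} Your count $2+2+2-3$ needs the three conditions coming from $c_1c_2c_3=1$ to be independent. This holds because $W_1+\mathrm{Ad}\rho(c_1)W_2=\mathfrak{sl}_2$ whenever $\rho(c_1)$ and $\rho(c_2)$ do not commute, but you should say so.

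\textbf{The missing idea.} You do not need $T_\chi\X(M)\cong H^1$, which fails at reducible characters, in order to use cohomology everywhere. For \emph{any} semisimple $\rho$, $H^1(\pi_1M;\mathrm{Ad}\rho)=0$ already implies that $[\rho]$ is a reduced isolated point of $\X(M)$. Indeed, $T_\rho\mathcal R(M)=Z^1=B^1=T_\rho(\mathrm{SL}_2(\C)\cdot\rho)$, so a Luna slice at $\rho$ meets $\mathcal R(M)$ in a reduced point. Then $\X(M)$ is \'etale-locally that point modulo the stabilizer.

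With this, the abelian cases become short. For $\rho$ diagonal, write $\mathrm{Ad}\rho=\C\oplus\C_\psi\oplus\C_{\psi^{-1}}$, where $\psi$ is the square of the eigenvalue character.
\begin{enumerate}
\item $H^1(\pi_1M;\C)=0$, since $H_1(M;\Q)=0$. This handles the trivial summand, and also the whole of $\mathrm{Ad}\rho$ when $\psi=1$ (the central characters).
\item If $\psi(h)\neq1$, i.e.\ $u\neq\pm2$, the central element $h$ acts by a nontrivial scalar, and this kills $H^1(\pi_1M;\C_{\psi^{\pm1}})$.
\item If $\psi(h)=1\neq\psi$, inflation--restriction reduces the computation to the triangle group. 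There a cocycle is determined by $(z(c_1),z(c_2))\in\C^2$, with $z(c_3)=z(c_1)+\psi(c_1)z(c_2)$. It is subject to $z(c_i)=0$ for the (at most one) index $i$ with $\psi(c_i)=1$.
\end{enumerate}
In case 3, $\dim Z^1=1=\dim B^1$ as soon as some $x_i=\pm2$. So $H^1\neq0$ exactly when no $x_i$ equals $\pm2$, which is precisely the exceptional abelian case.

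Your irreducible-case computation fits the same scheme. Together these give a uniform proof that never differentiates \eqref{eq:rel1}--\eqref{eq:rel12}.
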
 
In light of Proposition \ref{prop:red-nonexcep}, what remains to understand is whether $X(M)$ is reduced at exceptional abelian characters. Since all points in $X(M)$ are isolated, this is equivalent to checking whether $T_{\chi}\X(M)$ is non-zero for $\chi$ an exceptional abelian character.

The main result of this section is the following:

\begin{proposition}
	\label{prop:nred-excep} Let $M$ be a small Seifert manifold and let $\chi \in X(M)$ be an exceptional abelian character. Then $T_{\chi}\X(M)$ has dimension $1.$
\end{proposition}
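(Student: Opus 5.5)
The plan is to compute the Zariski tangent space directly from the presentation of Theorem \ref{thm:presCharacSchemeSeifert}. We have $\C[\X(M)]=\C[u,x_1,x_2,x_3,y_1,y_2,y_3]/I$, so $T_\chi\X(M)$ is the kernel of the Jacobian of the generators of $I$ at the point $\chi$, acting on $(\dot u,\dot x_i,\dot y_i)\in\C^7$.

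\textbf{Step 1: coordinates of $\chi$.} Since $\chi$ is abelian with $\chi(h)=\pm2$, we may take $\rho(h)=\varepsilon_0 I$ with $\varepsilon_0=\pm1$. Indeed, if $\rho(h)$ were a nontrivial parabolic, then every $\rho(c_i)$ commuting with it would be $\pm$parabolic, contradicting $x_i\neq\pm2$; and in any case the character only depends on the semisimplification. So $u=2\varepsilon_0$ and $y_i=\varepsilon_0x_i$. Write $x_i=\zeta_i+\zeta_i^{-1}$ with $\zeta_i\neq\pm1$. From $c_i^{p_i}=h^{q_i}$ we get $\zeta_i^{p_i}=\varepsilon_0^{q_i}=:\varepsilon_i$. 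Abelianness, after diagonalizing, gives $\zeta_3=\zeta_1^{\pm1}\zeta_2^{\pm1}$, i.e.\ $x_3=x_1x_2-x_3'$ in the usual way.

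\textbf{Step 2: evaluate the Jacobian.} Use Lemma \ref{lemma:Chebyshev2} for $S_{p_i}(x_i)=0$, $S_{p_i\pm1}(x_i)=\pm\varepsilon_i$, $S'_{p_i}(x_i)$, $S'_{p_i\pm1}(x_i)$. Use the values $S_n(2\varepsilon_0)=n\varepsilon_0^{n-1}$ and $T_n'=nS_n$, together with derivatives of $S_n$ at $\pm2$ (computable by a limit or directly), for the terms in $u$.

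First the relations \ref{eq:rel1}. At $u=2\varepsilon_0$, $y_i=\varepsilon_0x_i$, the gradient of $u^2+x_i^2+y_i^2-ux_iy_i-4$ is $(2u-x_iy_i,\,2x_i-uy_i,\,2y_i-ux_i)$. This equals $(\varepsilon_0(4-x_i^2),0,0)$. Since $x_i\neq\pm2$, this forces $\dot u=0$.

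Next, \ref{eq:rel5} gives $T_{p_i}'(x_i)\dot x_i=p_iS_{p_i}(x_i)\dot x_i=0$, so it is vacuous. The relations \ref{eq:rel6} and \ref{eq:rel9} then give linear relations between $\dot y_i$ and $\dot x_i$. I expect these to reduce to $\dot y_i=\varepsilon_0\dot x_i$ plus a condition coupling $\dot x_i$ with $\dot u=0$; the form of Lemma \ref{lemma:Chebyshev2} is well suited to this. Then \ref{eq:rel2}--\ref{eq:rel4} linearized should give a single linear relation among $\dot x_1,\dot x_2,\dot x_3$, namely the one expressing that a deformation of a diagonal abelian representation keeps $\zeta_3=\zeta_1\zeta_2$. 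Finally, \ref{eq:rel7}--\ref{eq:rel12} must be checked.

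\textbf{Step 3: count.} The goal is to show that the solution space is exactly one-dimensional. My expectation is that \ref{eq:rel6}, \ref{eq:rel8} and \ref{eq:rel9} force $\dot x_i$ to vanish in the abelian directions (since $S_{p_i}'(x_i)\neq0$ pins $\dot x_i$ to $\dot u=0$), while the remaining freedom is in the $y$-directions, namely $\dot y_i$ not tied to $\dot x_i$, for instance $\dot y_1=\lambda$, $\dot y_2=-\lambda$ up to coefficients. This corresponds to a first-order deformation toward a non-abelian representation in which $\rho(h)$ leaves $\pm I$ infinitesimally. To organize the computation, I will set $\dot u=0$ and $\dot x_i=0$ as quickly as possible, then show the remaining equations on $(\dot y_1,\dot y_2,\dot y_3)$ have rank exactly $2$.

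\textbf{Main obstacle.} The main obstacle is the bookkeeping in this final rank computation. The relations \ref{eq:rel10}--\ref{eq:rel12} involve $S_{q_3}(u)$, $S'_{q_3}(u)$ and the sign conventions $\varepsilon_i$, so it must be verified that no hidden extra relation kills the remaining direction, and also that no two directions survive. To show the dimension is at least $1$, I would exhibit an explicit nonzero vector and check it against every linearized relation using Lemma \ref{lemma:Chebyshev2}. To show the dimension is at most $1$, I would extract two independent equations from \ref{eq:rel2} and \ref{eq:rel3}.
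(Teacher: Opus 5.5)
\textbf{Genuine gap.} Your framework is the paper's: you linearize the presentation of Theorem~\ref{thm:presCharacSchemeSeifert} at $\chi$. Your derivation of $\dot u=0$ from \eqref{eq:rel1} is correct; the paper gets it from \eqref{eq:rel5} instead, using $S_{q_i}(2\varepsilon)=q_i\varepsilon^{q_i-1}\neq 0$. After that, however, your predictions are wrong, and the Step~3 plan cannot work.

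With $\dot u=0$, the linearization of \eqref{eq:rel6} vanishes identically. Its $\dot y_i$-coefficient is $S_{p_i}(x_i)=0$, and its $\dot x_i$-coefficient is $-S'_{p_i}(x_i)\,\varepsilon x_i+S'_{p_i-1}(x_i)\,2\varepsilon=0$ by Lemma~\ref{lemma:Chebyshev2}. So nothing ``pins'' $\dot x_i$. The linearizations of \eqref{eq:rel9} and \eqref{eq:rel8} both give $\varepsilon q_i\dot y_i=(p_i+q_i)\dot x_i$, not $\dot y_i=\varepsilon\dot x_i$. In particular $\dot x_i=0$ forces $\dot y_i=0$, so the tangent space meets the slice $\dot u=\dot x_1=\dot x_2=\dot x_3=0$ only in $0$. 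The vector you would find there from the $\Z\times F_2$ relations alone is $\dot y_i\propto\zeta_i-\zeta_i^{-1}$: it is tangent to the abelian locus, moving $\rho(h)$ off $\pm I$ with the $\rho(c_i)$ fixed. That vector is killed by \eqref{eq:rel9}. So the nonzero vector you intend to exhibit in that slice does not exist.

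Likewise, \eqref{eq:rel2}--\eqref{eq:rel4} cannot give a relation among $\dot x_1,\dot x_2,\dot x_3$ alone, nor supply your upper bound. The reason is that the locus $\{\rho(h)=\varepsilon I\}\cong X(F_2)$ inside $X(\Z\times F_2)$ passes through $\chi$ with $\dot x_1,\dot x_2,\dot x_3$ free. The cutting-down must come from the relations encoding $c_i^{p_i}=h^{q_i}$.

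In the paper it goes as follows. Substitute $\varepsilon q_j\dot y_j=(p_j+q_j)\dot x_j$ and $2x_3-x_ix_j=(\zeta_i-\zeta_i^{-1})(\zeta_j-\zeta_j^{-1})$ into the linearization of \eqref{eq:rel7}; it becomes $\frac{p_1\dot x_1}{q_1(\zeta_1-\zeta_1^{-1})}=\frac{p_2\dot x_2}{q_2(\zeta_2-\zeta_2^{-1})}$. Then \eqref{eq:rel4} extends this equality to $i=3$. These two relations, together with $\dot u=0$ and the three relations from \eqref{eq:rel9}, are six independent conditions on seven unknowns.

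The tangent line is spanned by $\dot u=0$, $\dot x_i=\frac{q_i}{p_i}(\zeta_i-\zeta_i^{-1})$, $\dot y_i=\varepsilon\frac{p_i+q_i}{q_i}\dot x_i$. Every $\dot x_i$ is nonzero, the opposite of the picture you describe. The lower bound then requires checking that this vector satisfies the linearizations of \eqref{eq:rel2}, \eqref{eq:rel3} and \eqref{eq:rel10}--\eqref{eq:rel12}. These reduce to Laurent-polynomial identities in $\zeta_1,\zeta_2$. Your proposal defers exactly this computation, and what it anticipates about it is incorrect.
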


In particular, we get that $\X(M)$ is reduced if and only if $M$ does not have any exceptional abelian character.

Since we have a presentation of $\C[\X(M)],$ the tangent space at $\chi$ may be computed thanks to the following standard lemma:

\begin{lemma}\label{lemma:tangentSpacePres} Let $\X$ be an affine scheme, with $\C[\X]\simeq \C[x_1,\ldots,x_k]/(f_1,\ldots ,f_r),$ and let $z$ be a closed point in $\X.$ Then $T_z\X$ admits the presentation:
$$T_z\X\simeq \mathrm{Span}_{\C}(dx_1,\ldots,dx_k)/\mathrm{Span}_{\C}(df_1,\ldots,df_r),$$
where the relation $df_i$ is
$$df_i= \underset{j=1}{\overset{k}{\sum}} \frac{\partial f_i}{\partial x_j}(z) dx_j.$$ 
\end{lemma}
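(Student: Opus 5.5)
The plan is to identify $T_z\X$ through the Zariski cotangent space $\mathfrak m_z/\mathfrak m_z^2$, of which it is the dual. The presentation in the statement is exactly a presentation of this cotangent space, and a finite-dimensional vector space and its dual have the same dimension. Write $A=\C[x_1,\ldots,x_k]$ and $z=(z_1,\ldots,z_k)$. Let $M=(x_1-z_1,\ldots,x_k-z_k)\subset A$ be the maximal ideal of $z$ in affine space, and let $I=(f_1,\ldots,f_r)$. Since $z$ is a closed point of $\X$, we have $I\subset M$, i.e.\ $f_i(z)=0$ for all $i$. The maximal ideal of $z$ in $\C[\X]=A/I$ is $\mathfrak m_z=M/I$, and the key identification is
$$\mathfrak m_z/\mathfrak m_z^2\simeq M/(M^2+I).$$

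First, $M/M^2$ is a $\C$-vector space freely spanned by the classes $dx_j:=[x_j-z_j]$. Indeed, after the translation $x_j\mapsto x_j-z_j$, $M$ is generated by the linear monomials and $M^2$ by the monomials of degree $\geq 2$.

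Second, I compute the image of $I$ in $M/M^2$. By Taylor expansion at $z$ and using $f_i(z)=0$,
$$f_i\equiv \sum_{j=1}^k \frac{\partial f_i}{\partial x_j}(z)(x_j-z_j) \pmod{M^2},$$
so the class of $f_i$ is $df_i$. A general element of $I$ has the form $\sum_i g_if_i$, and $g_i\equiv g_i(z) \pmod M$ with $f_i\in M$, so $g_if_i\equiv g_i(z)f_i \pmod{M^2}$. Hence $(I+M^2)/M^2=\mathrm{Span}_\C(df_1,\ldots,df_r)$, which gives
$$\mathfrak m_z/\mathfrak m_z^2\simeq \mathrm{Span}_\C(dx_1,\ldots,dx_k)/\mathrm{Span}_\C(df_1,\ldots,df_r).$$

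As a cross-check, and to connect with the dual-numbers viewpoint used later for order-$2$ deformations, I would also phrase $T_z\X$ as the set of $\C$-algebra maps $\C[\X]\to\C[\varepsilon]/(\varepsilon^2)$ lifting evaluation at $z$. Such a map sends $x_j\mapsto z_j+\varepsilon v_j$, and it is well defined if and only if
$$f_i(z+\varepsilon v)=\varepsilon\sum_j \frac{\partial f_i}{\partial x_j}(z)v_j=0 \quad \text{for all } i.$$
So $T_z\X$ is the kernel of the Jacobian matrix at $z$, which is exactly the annihilator of $\mathrm{Span}(df_i)$ in the dual of $\mathrm{Span}(dx_j)$. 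The two descriptions are dual to each other and have equal dimension, which is all that Proposition \ref{prop:nred-excep} needs.

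There is no real obstacle; the only point requiring care is the reduction $g_if_i\equiv g_i(z)f_i \pmod{M^2}$, which uses $f_i\in M$, that is, that $z$ actually lies on $\X$.
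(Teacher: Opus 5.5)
The paper gives no proof of this lemma; it simply calls it standard. Your argument is a correct and complete proof of it. The identification $\mathfrak m_z/\mathfrak m_z^2\simeq M/(M^2+I)$ is right. So is the Taylor-expansion computation of the image of $I$ in $M/M^2$, where you correctly use $f_i\in M$ to replace $g_if_i$ by $g_i(z)f_i$ modulo $M^2$. The dual-numbers description of $T_z\X$ as the kernel of the Jacobian at $z$ is also right.

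You are right that the displayed quotient is literally the cotangent space $\mathfrak m_z/\mathfrak m_z^2$. The $\simeq$ in the statement is therefore the non-canonical isomorphism between a finite-dimensional space and its dual.

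Your cross-check is not just decorative, because the paper uses both descriptions. The proof of Proposition \ref{prop:nred-excep} works with the presentation, where the $dx_j$ are generators subject to the relations $df_i$. The proof of Proposition \ref{prop:order2} instead speaks of tangent vectors $v$ with coordinates $(du,dx_1,\ldots,dy_3)$ satisfying linear equations, which is your kernel description.
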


\begin{proof}[Proof of Proposition \ref{prop:nred-excep}]
	Let $\chi\in X(M)$ be an exceptional abelian character, and write 
	$$\chi(h)=2\varepsilon, \ \chi(c_1)=\zeta_1+\zeta_1^{-1}, \ \chi(c_2)=\zeta_2+\zeta_2^{-1}, \ \chi(c_3)=\zeta_1\zeta_2+(\zeta_1\zeta_2)^{-1},$$
	where $\varepsilon=\pm 1,$ and $\zeta_1,\zeta_2$ and $\zeta_3=\zeta_1\zeta_2$ are not $\pm 1$ and satisfy $\zeta_i^{p_i}=\varepsilon^{q_i}.$
	Note that one also has for $i=1,2,3$ that
	$$\chi(hc_i)=\varepsilon \chi(c_i).$$
	We use Theorem \ref{thm:presCharacSchemeSeifert} and Lemma \ref{lemma:tangentSpacePres} to compute $T_{\chi}\X(M).$
	The tangent space $T_{\chi}\X(M)$ will be spanned by variables $du,dx_1,dx_2,dx_3,dy_1,dy_2,dy_3,$ and subjects to relations that are obtained by differentiating relations \ref{eq:rel1} to \ref{eq:rel4}, and \ref{eq:rel5} to \ref{eq:rel12}. We will denote $\chi(h),\chi(c_i)$ and $\chi(hc_i)$ by $u, x_i,y_i$ respectively, for $i=1,2,3.$
	
	We start by differentiating relation \ref{eq:rel5}. We get, for $i=1,2$ or $3:$
	$$T_{q_i}'(u)du=T_{p_i}'(x_i)dx_i\Leftrightarrow q_iS_{q_i}(u)du=p_iS_{p_i}(x_i)dx_i.$$
	However, by Lemma \ref{lemma:Chebyshev2}, $S_{p_i}(x_i)=0,$ while $S_{q_i}(u)\neq 0,$ so these relations are equivalent to:
	\begin{equation}
		\label{eq:diff1} du=0
	\end{equation}
	Given Equation \ref{eq:diff1}, derivating Equation \ref{eq:rel1}, and taking into account Equation \ref{eq:diff1}, we get, for $i=1,2$ or $3,$ that:
	$$2x_idx_i+2y_idy_i -ux_idy_i-uy_idx_i=0.$$
	However, $y_i=\varepsilon x_i,$ $u=2\varepsilon,$ so the above equation is equivalent to
	$$2x_i dx_i +2\varepsilon x_idy_i-2\varepsilon x_i dy_i-2\varepsilon^2 x_i dx_i=0,$$
	which is trivally satisfied since $\varepsilon^2=1.$
	
	Next we consider the derivative of relation \ref{eq:rel9}. Taking into account that $du=0,$ we get, for $i=1,2$ or $3:$
	\begin{multline*}S_{q_i}(2\varepsilon)dy_i-S_{q_i-1}(2\varepsilon)dx_i=(p_i+1)S_{p_i+1}(x_i)dx_i 
	\\ \Leftrightarrow q_i\varepsilon^{q_i-1}dy_i-(q_i-1)\varepsilon^{q_i-2}dx_i=\varepsilon^{q_i}(p_i+1)dx_i
	\end{multline*}
	using Lemma \ref{lemma:Chebyshev2}. Multiplying by $\varepsilon^{q_i},$ we get:
	\begin{equation}
		\label{eq:diff2} \varepsilon q_idy_i=\left(p_i+q_i\right)dx_i
	\end{equation}
	Now, we check that the derivatives of Equations \ref{eq:rel6} and \ref{eq:rel8} are redundant with Equations \ref{eq:diff1} and \ref{eq:diff2}.  
	The derivative of Equation \ref{eq:rel6} is (taking into account $du=0$):
	
	$$-S_{p_i}(x_i)dy_i-S_{p_i}'(x_i)y_idx_i+S_{p_i-1}'(x_i)udx_i=0.$$
	Since $S_{p_i}(x_i)=0, S_{p_i}'(x_i)=\frac{2p_i \varepsilon^{q_i}}{x_i^2-4}, S_{p_i-1}'(x_i)=\frac{p_i x_i \varepsilon^{q_i}}{x_i^2-4}$ by Lemma \ref{lemma:Chebyshev2}, and recalling $u=2\varepsilon,$ $y_i=\varepsilon x_i,$ we get:
	$$-\frac{2 p_i \varepsilon^{q_i}}{x_i^2-4}\varepsilon x_i dx_i + \frac{p_i x_i \varepsilon^{q_i}}{x_i^2-4}(2\varepsilon)dx_i=0,$$
	which is trivially satisfied.
	
	Similarly, the derivative of Equation \ref{eq:rel8} is (given that $du=0$)
	$$S_{q_i+1}(u)dy_i-S_{q_i}(u)dx_i-S_{p_i+1}'(x_i)y_idx_i-S_{p_i+1}(x_i)dy_i+S_{p_i}'(x_i)udx_i=0$$
	Lemma \ref{lemma:Chebyshev2} gives $S_{q_i+1}(u)=(q_i+1)\varepsilon^{q_i}, S_{q_i}(u)=q_i\varepsilon^{q_i-1}, S_{p_i+1}(x_i)=\varepsilon^{q_i}, S_{p_i+1}'(x_i)=\frac{p_ix_i\varepsilon^{q_i}}{x_i^2-4},$ and $S_{p_i}'(x_i)=\frac{2p_i\varepsilon^{q_i}}{x_i^2-4}.$ Injecting in the previous equation, we get:
	\begin{multline*}
		(q_i+1)\varepsilon^{q_i}dy_i-q_i\varepsilon^{q_i-1}dx_i-\frac{p_ix_iy_i\varepsilon^{q_i}}{x_i^2-4}dx_i-\varepsilon^{q_i}dy_i+\frac{4p_i\varepsilon^{q_i-1}}{x_i^2-4}dx_i=0
		\\ \Leftrightarrow p_idx_i +\frac{p_i(4-\varepsilon x_i y_i)}{x_i^2-4}dx_i=0
	\end{multline*}
	where we used Equation \ref{eq:diff2} and simplified by $\varepsilon^{q_i-1}.$ Since $y_i=\varepsilon x_i,$ the last equation is trivially satisfied.
	
	We turn to the derivative of Equation \ref{eq:rel7}. Keeping in mind $du=0$ and $S_{p_i}(x_i)=0$ we get:
	$$S_{q_i}(u)dy_j-S_{q_i-1}(u)dx_j-S_{p_i}'(x_i)x_3 dx_i +S_{p_i-1}(x_i)dx_j+S_{p_i-1}'(x_i)x_jdx_i=0,$$
	
	for $\lbrace i,j \rbrace=\lbrace 1,2 \rbrace.$ However, by Lemma \ref{lemma:Chebyshev2}, this simplifies to
	$$q_i\varepsilon^{q_i-1}dy_j-(q_i-1)\varepsilon^{q_i-2}dx_j-\frac{2p_i\varepsilon^{q_i}x_3}{x_i^2-4}dx_i-\varepsilon^{q_i}dx_j+\frac{p_ix_ix_j\varepsilon^{q_i}}{x_i^2-4}dx_i=0$$
	Now, we use $\varepsilon q_jdy_j=(p_j+q_j)dx_j$ and simplify by $\varepsilon^{q_i}:$
	$$\frac{q_ip_j}{q_j}dx_j-\frac{2p_ix_3}{x_i^2-4}dx_i+\frac{p_ix_ix_j}{x_i^2-4}dx_i=0\Leftrightarrow \frac{p_j}{q_j}dx_j=\frac{(2x_3-x_ix_j)p_i}{(x_i^2-4)q_i}dx_i$$
	Now, recall that $x_i=\zeta_i+\zeta_i^{-1}$ for $i=1,2$ and $x_3=\zeta_1\zeta_2+(\zeta_1\zeta_2)^{-1}.$ Substituting we find that 
	\begin{equation}
		\label{eq:diff3}\frac{p_jdx_j}{q_j(\zeta_j-\zeta_j^{-1})}=\frac{p_idx_i}{q_i(\zeta_i-\zeta_i^{-1})},
	\end{equation}
	where $\lbrace i,j\rbrace=\lbrace 1,2\rbrace.$
	Because of Equation \ref{eq:diff3}, let us introduce the notation
	$$dX=\frac{p_1dx_1}{q_1(\zeta_1-\zeta_1^{-1})}=\frac{p_2dx_2}{q_2(\zeta_2-\zeta_2^{-1})}.$$
	We turn to the derivative of Equation \ref{eq:rel4}. We have, using $du=0:$
	$$2dx_3-udy_3-x_1dx_2-x_2dx_1+y_1dy_2+y_2dy_1=0$$
	Using Equation \ref{eq:diff2} and $u=2\varepsilon, y_i=\varepsilon x_i,$ this simplifies to:
	$$2\frac{p_3}{q_3}dx_3=\frac{p_1}{q_1}x_2dx_1+\frac{p_2}{q_2}x_1dx_2.$$
	Finally, using Equation \ref{eq:diff3}, we get:
	$$2\frac{p_3}{q_3}dx_3=(\zeta_2+\zeta_2^{-1})(\zeta_1-\zeta_1^{-1})dX +(\zeta_1+\zeta_1^{-1})(\zeta_2-\zeta_2^{-1})dX,$$
	from which we deduce the equation:
	\begin{equation}
		\label{eq:diff4}
		dX=\frac{p_1dx_1}{q_1(\zeta_1-\zeta_1^{-1})}=\frac{p_2dx_2}{q_2(\zeta_2-\zeta_2^{-1})}=\frac{p_3dx_3}{q_3(\zeta_3-\zeta_3^{-1})}
	\end{equation}
	We claim that the derivatives of all remaining equations, that is, Equations \ref{eq:rel2}, \ref{eq:rel3}, \ref{eq:rel10}, \ref{eq:rel11} and \ref{eq:rel12}, are redundant with Equations \ref{eq:diff1}, \ref{eq:diff2} and \ref{eq:diff4}. Let us start with the derivative of Equation \ref{eq:rel2}. Bearing in mind $du=0$ and $y_i=\varepsilon x_i,$ we get, for $\lbrace i,j \rbrace=\lbrace 1,2\rbrace:$
	$$2dy_i-2\varepsilon dx_i-x_jdy_3-\varepsilon x_3 dx_j+x_3dy_j+\varepsilon x_j dx_3=0.$$
	Using Equation \ref{eq:diff2} and simplifying by $\varepsilon$ this translates to:
	$$\frac{2p_idx_i}{q_i}-\frac{p_3x_jdx_3}{q_3}+\frac{p_jx_3dx_j}{q_j}=0.$$
	Now, using Equation \ref{eq:diff4} and $x_i=\zeta_i+\zeta_i^{-1}, x_j=\zeta_j+\zeta_j^{-1},x_3=\zeta_i\zeta_j+(\zeta_i\zeta_j)^{-1}:$
	$$2(\zeta_i-\zeta_i^{-1})dX-(\zeta_j+\zeta_j^{-1})(\zeta_i\zeta_j-\zeta_i^{-1}\zeta_j^{-1})dX +(\zeta_i\zeta_j+(\zeta_i\zeta_j)^{-1})(\zeta_j-\zeta_j^{-1})dX=0.$$
	Expanding the left hand side, one sees that this relation is trivially satisfied, hence the derivative of Equation \ref{eq:rel2} is redundant with Equation \ref{eq:diff1},\ref{eq:diff2} and \ref{eq:diff4}.
	
	The treatment of Equation \ref{eq:rel3} is fairly similar to the one of Equation \ref{eq:rel2}; hence we skip this calculation and leave it as exercise for the interested reader.
	
	We move to the derivative of Equation \ref{eq:rel10}. Using $du=0$ and Lemma \ref{lemma:Chebyshev2}, that equation is equivalent to:
	$$q_i\varepsilon^{q_i-1}dy_k-(q_i-1)\varepsilon^{q_i}dx_k-\varepsilon^{q_i}dx_k-\frac{p_ix_ix_k\varepsilon^{q_i}dx_i}{x_i^2-4}+\frac{2p_ix_j\varepsilon^{q_i}}{x_i^2-4}=0$$
	for $i\neq j \neq k$ and $j\neq 3.$
	Next we simplify by $\varepsilon^{q_i}$ and use Equation \ref{eq:diff2} to get:
	$$\frac{q_ip_kdx_k}{q_k}+\frac{p_i(2x_j-x_ix_k)dx_i}{x_i^2-4}=0.$$
	Plugging in Equation \ref{eq:diff3}, this is equivalent to:
	$$(\zeta_k-\zeta_k^{-1})dX+\frac{2(\zeta_j+\zeta_j^{-1})-(\zeta_i+\zeta_i^{-1})(\zeta_k+\zeta_k^{-1})}{(\zeta_i-\zeta_i^{-1})}dX=0$$
	$$\Leftrightarrow \left[-2\zeta_i\zeta_k^{-1}-2\zeta_i^{-1}\zeta_k+2\zeta_j+2\zeta_j^{-1}\right]dX=0.$$
	Since $j\neq 3$ and $i\neq j \neq k,$ it is easy to see that the left hand side vanishes, hence the derivative of Equation \ref{eq:rel10} is redundant.
	
	Finally, we treat the derivative of Equation \ref{eq:rel11}. Using Lemma \ref{lemma:Chebyshev2} and Equation\ref{eq:diff1} we get:
	$$(q_i+1)\varepsilon^{q_i}dy_j-q_i\varepsilon^{q_i-1}dx_j-\frac{2p_i\varepsilon^{q_i}}{x_i^2-4}y_3dx_i+\varepsilon^{q_i-1}dy_j+\frac{p_ix_i\varepsilon^{q_i}}{x_i^2-4}y_jdx_i=0$$
	Simplifying by $\varepsilon^{q_i}$ and using Equation \ref{eq:diff2}, this is equivalent to:
	$$\frac{p_jdx_j}{q_j}+\frac{p_i(x_ix_j-2x_3)dx_i}{q_i}=0.$$
	Using Equation \ref{eq:diff4} and $x_l=\zeta_l+\zeta_l^{-1}$ for $l=i,j,3,$ with $\zeta_3=\zeta_i\zeta_j,$ this translates to:
	$$\left[ (\zeta_i-\zeta_i^{-1})(\zeta_j-\zeta_j^{-1})+(\zeta_i+\zeta_i^{-1})(\zeta_j+\zeta_j^{-1})-2(\zeta_i\zeta_j+(\zeta_i\zeta_j)^{-1})\right]dX=0.$$ 
	Expanding the left hand side reveals that this equation is trivially satisfied, that is, the derivative of Equation \ref{eq:rel11} is redundant. The case of Equation \ref{eq:rel12} is very similar; we leave it again as exercise for the interested reader.
	Hence we are done; the tangent space is the vector space spanned by the variables $du,dx_1,dx_2,dx_3,dy_1,dy_2,dy_3$ and subject to the relations \ref{eq:diff1}, \ref{eq:diff2} and \ref{eq:diff4}, it is manifest that it is one-dimensional (generated by $dx_1$ for instance).

\end{proof}

\section{Order $2$ deformations}
\label{sec:order2}
In this last section, we prove the following proposition, which together with Proposition \ref{prop:nred-excep}, concludes the proof of Theorem \ref{thm:main}:
\begin{proposition}
	\label{prop:order2} Let $M$ be a small Seifert manifold and let $\chi \in \mathcal{X}(M)$ be an exceptional abelian character. Then $\chi$ has no order $2$ deformations in $\mathcal{X}(M).$
\end{proposition}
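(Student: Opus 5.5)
We work in the presentation of $\C[\X(M)]$ from Theorem \ref{thm:presCharacSchemeSeifert}. An order $2$ deformation of $\chi$ is an algebra morphism $\C[\X(M)]\to\C[t]/(t^3)$ that reduces to $\chi$ modulo $t$ and whose first-order part is a nonzero tangent vector. By Proposition \ref{prop:nred-excep}, $T_\chi\X(M)$ is one-dimensional, cut out by Equations \ref{eq:diff1}, \ref{eq:diff2} and \ref{eq:diff4}. So we write
$$u=2\varepsilon+a t^2,\qquad x_i=x_i^0+b_i t+c_i t^2,\qquad y_i=\varepsilon x_i^0+b_i' t+c_i' t^2,$$
where $x_i^0=\zeta_i+\zeta_i^{-1}$ and $i=1,2,3$. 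The first-order data is forced:
$$b_i=\frac{q_i(\zeta_i-\zeta_i^{-1})}{p_i}\,\delta,\qquad b_i'=\varepsilon\,\frac{p_i+q_i}{q_i}\,b_i,$$
for a scalar $\delta=dX\neq 0$. The aim is to show that the $t^2$-coefficients of the relations \ref{eq:rel1}--\ref{eq:rel12} cannot all vanish unless $\delta=0$.

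\textbf{Step 1: relations that only fix $a$.} First I would expand the relations involving a single index $i$. For \ref{eq:rel5}, use $T_n'=nS_n$, Lemma \ref{lemma:Chebyshev2} (in particular $S_{p_i}(x_i^0)=0$ and $S_{p_i}'(x_i^0)=2p_i\varepsilon^{q_i}/((x_i^0)^2-4)$), and $T_{q_i}''(2\varepsilon)$. For \ref{eq:rel1} the expansion is direct. A quick computation shows that \ref{eq:rel1} has $t^2$-coefficient
$$\varepsilon a\bigl(4-(x_i^0)^2\bigr)+\frac{p_i^2}{q_i^2}b_i^2,$$
and substituting $b_i$ turns this into $a=\varepsilon\delta^2$ for every $i$. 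I expect \ref{eq:rel5} to give the same value, so these two relations are consistent and only pin down $a$.

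\textbf{Step 2: relations with unknown $c_i,c_i'$.} Next I would expand \ref{eq:rel9}, \ref{eq:rel6} and \ref{eq:rel8} to second order. These involve the unknowns $c_i,c_i'$ linearly, with the same coefficient pattern as in the proof of Proposition \ref{prop:nred-excep}: the $t^2$-part of \ref{eq:rel9} has the shape of Equation \ref{eq:diff2} applied to $(c_i,c_i')$, plus a quadratic expression in $b_i,b_i'$ and $a$. The quadratic expression needs the second derivatives $S_n''$ at $x_i^0$ and at $2\varepsilon$. I would compute these by differentiating Equation \ref{eq:Cheb3} once more, or by using $S_n(2\varepsilon)$, $S_n'(2\varepsilon)=\varepsilon^n n(n^2-1)/6$, and similar values. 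Solving \ref{eq:rel9} expresses $c_i'$ in terms of $c_i$ and $\delta^2$. Since the first-order parts of \ref{eq:rel6} and \ref{eq:rel8} were redundant, their second-order parts should then reduce to pure scalar equations of the form $\lambda_i\delta^2=0$. This is the first place to look for an obstruction.

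\textbf{Step 3: the mixed relations.} If every single-index relation is consistent, the obstruction must come from relations mixing indices, namely \ref{eq:rel4}, \ref{eq:rel7} and \ref{eq:rel10}. Their second-order parts are linear in $c_1,c_2,c_3$, after eliminating the $c_i'$ via Step 2. Repeating the linear algebra of Proposition \ref{prop:nred-excep} with an inhomogeneous right-hand side in $\delta^2$, one obtains an overdetermined linear system for $(c_1,c_2,c_3)$. I would show it is consistent only if some explicit Laurent polynomial in $\zeta_1,\zeta_2$, which is nonzero because $\zeta_i\neq\pm1$, multiplied by $\delta^2$, vanishes. That forces $\delta=0$, a contradiction.

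The main obstacle is identifying which relation produces the nonzero obstruction and controlling the second derivatives of the Chebyshev polynomials, especially $S_{q_i}''(2\varepsilon)$. I would attack it first through \ref{eq:rel6} and \ref{eq:rel8}, since there $c_i,c_i'$ might drop out exactly as at first order, leaving a scalar coefficient that can be checked directly to be nonzero.
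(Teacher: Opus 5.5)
Your set-up is the right one: a $\C[t]/(t^3)$-point whose second-order parts $a,c_i,c_i'$ are unknowns. Step 1 is also correct. The $t^2$-coefficients of \ref{eq:rel1} and \ref{eq:rel5} are nonzero multiples of $a-\varepsilon\delta^2$, so they only fix $a=\varepsilon\delta^2$. This is exactly why the paper's one-line argument (the second derivative of \ref{eq:rel5} along $v$ is nonzero) is not by itself an obstruction: it omits the term $T_{q_i}'(2\varepsilon)\,a=q_i^2\varepsilon^{q_i-1}a$ coming from the $t^2$-part of $u$.

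The genuine gap is that beyond Step 1 your proposal only says where you would look. The decisive step, exhibiting a consequence of the relations whose $t^2$-coefficient is a nonzero multiple of $\delta^2$, is never carried out. Step 2 cannot supply it. For fixed $i$, the relations \ref{eq:rel1}, \ref{eq:rel5}, \ref{eq:rel6}, \ref{eq:rel9}, \ref{eq:rel8} are precisely the Heusener--Porti presentation, in the variables $(u,x_i,y_i)$, of the character scheme of $\langle h,c_i\mid [h,c_i],\ c_i^{p_i}=h^{q_i}\rangle\cong\Z$, which is a smooth curve. Your first-order vector is tangent to it, and different indices interact only through $a$, which Step 1 shows is consistent. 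So all your $\lambda_i$ vanish.

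The expected shape of the Step 3 obstruction, a Laurent polynomial in the $\zeta_i$ that is nonzero because $\zeta_i\neq\pm1$, is also off. Your outline never uses $e(M)\neq 0$, and any valid obstruction must. Take $p_1=p_2=p_3=3$ and $(q_1,q_2,q_3)=(1,1,2)$, so $e(M)=0$, with $\varepsilon=1$ and $\zeta_1=\zeta_2=\omega=e^{2\pi i/3}$. The tangent-space computation goes through unchanged. Yet $\rho_s(h)=\mathrm{diag}(s^3,s^{-3})$, $\rho_s(c_1)=\rho_s(c_2)=\mathrm{diag}(\omega s,(\omega s)^{-1})$ is a genuine curve of characters through $\chi$ tangent to $v$. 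The paper's argument never uses $e(M)\neq0$ either, so it proves too much.

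One way to close the gap:
\begin{enumerate}
\item On the open set $x_1^2\neq 4$ of the representation scheme, conjugate so that $\rho(c_1)=\mathrm{diag}(\mu,\mu^{-1})$. Then $[h,c_1]=1$ forces $\rho(h)=\mathrm{diag}(\nu,\nu^{-1})$.
\item Write $\rho(c_2)=(m_{kl})$. One computes $2y_1-ux_1=(\nu-\nu^{-1})(\mu-\mu^{-1})$ and $K:=x_1^2+x_2^2+x_3^2-x_1x_2x_3-4=-(x_1^2-4)\,m_{12}m_{21}$. Relation \ref{eq:rel2} for $i=1$ equals $-(2y_1-ux_1)\,m_{12}m_{21}$.
\item Hence $(2y_1-ux_1)K$ is $x_1^2-4$ times a defining relation, so it vanishes in the local ring of $\X(M)$ at $\chi$.
\item Both factors vanish at $\chi$. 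Their derivatives along $v$ are $2\varepsilon\kappa_1\delta$ and $e(M)\kappa_1\kappa_2\kappa_3\delta$, where $\kappa_i=\zeta_i-\zeta_i^{-1}$ and $e(M)=\frac{q_3}{p_3}-\frac{q_1}{p_1}-\frac{q_2}{p_2}$.
\item The $t^2$-coefficient of the product is therefore $2\varepsilon\,e(M)\kappa_1^2\kappa_2\kappa_3\,\delta^2$, which forces $\delta=0$.
\end{enumerate}
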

\begin{proof}
	By Proposition \ref{prop:nred-excep} and its proof, the tangent space $T_{\chi}\mathcal{X}(M)$ has dimension $1,$ and consists of vectors of coordinates $(du,dx_1,dx_2,dx_3,dy_1,dy_2,dy_3)$ satisfying Equations \ref{eq:diff1}, \ref{eq:diff2} and \ref{eq:diff4}. 
	Note that this implies that a non-zero vector $v$ in $T_{\chi}\mathcal{X}(M)$ has coordinates $du=0$ and $dx_i\neq 0$ for any $i.$
	For $\chi$ to have order $2$ deformations, one needs all of the equations \ref{eq:rel1} to \ref{eq:rel4} and \ref{eq:rel5} to \ref{eq:rel12} to have zero second derivative along $v.$ 
	
	We check that the second derivative of Equation \ref{eq:rel5} along $v$ is non-zero. This second derivative is:
	$$-T_{p_i}''(x_i)dx_i=-S_{p_i}'(x_i)dx_i=-\frac{2p_i\varepsilon^{q_i}dx_i}{x_i^2-4}\neq 0,$$
	by Lemma \ref{lemma:Chebyshev} and \ref{lemma:Chebyshev2}.
\end{proof}

\bibliographystyle{hamsalpha}

\bibliography{biblio}

\end{document}